\theoremstyle{plain}
\newtheorem{theorem}{Theorem}[section]
\newtheorem{lemma}[theorem]{Lemma}
\theoremstyle{definition}
\newtheorem{definition}[theorem]{Definition}
\theoremstyle{remark}
\DeclareMathOperator{\rk}{rank}	
\DeclareMathOperator{\tr}{tr}	    
\DeclareMathOperator{\vecc}{vec}	    
\DeclareMathOperator{\mat}{mat}	    
\DeclarePairedDelimiter\ceil{\lceil}{\rceil}    
\DeclarePairedDelimiter\floor{\lfloor}{\rfloor} 
\DeclareMathOperator*{\argmin}{arg\,min}        
\newcommand{\RR}{\mathbb R}
\newcommand\mb{\mathbf}
\DeclarePairedDelimiter{\norm}{\lVert}{\rVert}
\icmltitlerunning{A lifted framework for matrix sensing problems}
\begin{document}

\twocolumn[
\icmltitle{Over-parametrization via Lifting for Low-rank Matrix Sensing: \\ Conversion of Spurious Solutions to Strict Saddle Points }



\icmlsetsymbol{equal}{*}

\begin{icmlauthorlist}
\icmlauthor{Ziye Ma}{eecs}
\icmlauthor{Igor Molybog}{meta}
\icmlauthor{Javad Lavaei}{or}
\icmlauthor{Somayeh Sojoudi}{eecs}
\end{icmlauthorlist}

\icmlaffiliation{eecs}{Department of EECS, UC Berkeley, USA}
\icmlaffiliation{meta}{Meta AI}
\icmlaffiliation{or}{Department of IEOR, UC Berkeley, USA}

\icmlcorrespondingauthor{Ziye Ma}{ziyema@berkeley.edu}

\icmlkeywords{Machine Learning, ICML}

\vskip 0.3in
]



\printAffiliationsAndNotice{} 

\begin{abstract}


This paper studies the role of over-parametrization in solving non-convex optimization problems. The focus is on the important class of low-rank matrix sensing, where we propose an infinite hierarchy of non-convex problems via the lifting technique and the Burer-Monteiro factorization. This contrasts with the existing over-parametrization technique where the search rank is limited by the dimension of the matrix and it does not allow a rich over-parametrization of an arbitrary degree. We show that although the spurious solutions of the problem remain stationary points through the hierarchy, they will be transformed into strict saddle points (under some technical conditions) and can be escaped via local search methods. This is the first result in the literature showing that over-parametrization creates a negative curvature for escaping spurious solutions. We also derive a bound on how much over-parametrization is requited to enable the elimination of spurious solutions. 

\end{abstract}

\section{Introduction}
In this paper, we focus on an important class of non-convex optimization problems, named matrix sensing, which can be formulated as the feasibility problem:
\begin{align} \label{eq:SDP main}
    \mathrm{find}&\quad M\in\mathbb{R}^{n\times n} \\
    \mathrm{s.t.}& \quad \mathcal{A}(M) = \mathcal{A}(M^*) \notag\\
    & \quad \rk( M) \leq r, M \succeq 0.
    \notag
\end{align}
where the measurement operator $\mathcal{A}(\cdot): \mathbb{R}^{n \times n} \mapsto \mathbb{R}^{d}$ is a linear operator returning a $d$-dimensional measurement vector $\mathcal{A}(M)= [ \langle A_1, M \rangle, \dots, \langle A_d, M \rangle]^T$, for given sensing matrices $\{ A_i \}_{i=1}^d \in \mathbb{R}^{n \times n}$. The goal is to find an unknown matrix $M^*$ of rank $r$ associated with the measurement vector $b$, meaning that $b=\mathcal{A}(M^*)$, where $r$ is often much smaller than $n$. We factorize $M^*$ as $M^* = ZZ^\top$ where $Z \in \RR^{n \times r}$.

The problem \eqref{eq:SDP main} is extremely broad since solving any arbitrary polynomial optimization can converted to a series of problems in the form of \eqref{eq:SDP main} \cite{molybog2020conic}. In addition, the problem \eqref{eq:SDP main} directly arises in various applications such as collaborative filtering \citep{koren2009matrix}, phase retrieval \citep{singer2011angular,boumal2016nonconvex,shechtman2015phase}, motion detection \citep{fattahi2020exact}, and power system state estimation \citep{zhang2017conic,jin2019towards}. Moreover, its strikingly simple form is associated with only one source of non-convexity, which is the rank constraint. As a result, the existing works have extensively studied under what conditions one can recover $M^*$ exactly, and the centerpiece of this line of research is the notion of Restricted Isometry Property(RIP) of the measuring operator $\mathcal{A}$, which we state below:
\begin{definition}[RIP] \citep{candes2009exact} \label{def:rip}
    Given a natural number $p$, the linear map $\mathcal{A}: \mathbb{R}^{n \times n} \mapsto \mathbb{R}^{m}$ is said to satisfy $\delta_{p}$-RIP if there is a constant $\delta_{p} \in [0,1)$ such that
    \[ (1-\delta_{p})\|M \|_F^2 \leq \| \mathcal{A} (M) \|^2 \leq (1 + \delta_{p}) \|M\|_F^2 \]
    holds for all matrices $M \in \mathbb{R}^{n \times n}$ satisfying $\rk(M) \leq p$.
\end{definition}
This criterion is also intuitive to understand, as it measures how close $\mathcal{A}$ is to an identity operator (isometry) over matrices of rank at most $r$. If $\mathcal{A}$ is an exact isometry, or equivalently when it satisfies RIP with $\delta_{r} = 0$, we measure $M^*$ exactly and the recovery is trivial. Therefore, a small value for the RIP constant usually implies that the problem has a low computational complexity.

A popular approach to solving \eqref{eq:SDP main} is to use the so-called Burer-Monteiro (BM) factorization \cite{burer2003nonlinear}. The BM formulation explicitly factors $M$ as $M = XX^\top$, where $X \in \RR^{n \times r}$
\begin{equation}\label{eq:bm_main}
	\min_{X \in \mathbb{R}^{n \times r}} f(X) \coloneqq \frac{1}{2} \|\mathcal{A}(XX^T) - b\|^2.
\end{equation}
The problem \eqref{eq:bm_main} is an unconstrained smooth optimization problem, which means that highly scalable local search methods such as Gradient descent can be utilized to numerically solve it. Since the search is  over $\RR^{n \times r}$ instead of $\RR^{n \times n}$ in the original feasibility problem \eqref{eq:SDP main}, the number of variables is dramatically reduced from $\mathcal{O}(n^2)$ to $\mathcal{O}(nr)$, thereby improving its scalability. The main issue with \eqref{eq:bm_main} is that it is still a non-convex problem and thus it may contain spurious\footnote{A point is called spurious if it satisfies first-order and second-order necessary optimality conditions but is not a global minimum.} local minima, preventing local search methods from convergence to a global optimum. However, despite its non-convexity, a recent line of work \cite{zhang2019sharp,bi2020global,zhang2021general,ma2021sharp,ma2022noisy} has shown that if \eqref{eq:bm_main} satisfies the RIP condition with $\delta_{2r} < 1/2$, every local minimizer $\hat X$ of \eqref{eq:bm_main} will satisfy the relation $\hat X \hat X^\top = M^*$, precisely in the noiseless scenario and approximately when $b$ is corrupted by random noise. It has also been proven that $\delta_{2r} < 1/2$ is a sharp bound, meaning that there are counterexamples such that $\hat X \hat X^\top \neq M^*$ once $\delta_{2r} \geq 1/2$. This also falls in line with a prior result that $\delta_{2r} < 1/2$ is sufficient for recovering $M^*$ using specialized methods directly applied to \eqref{eq:SDP main} \cite{recht2010guaranteed,candes2011tight,cai2013sharp}.

\subsection{The power and limitation of over-parametrization}
The bound $\delta_{2r} < 1/2$ is sharp, and RIP conditions are difficult to satisfy and verify except for isometric Gaussian observations. In many applications, such as power system analysis, the RIP constant does not exist or is above $0.99$ \cite{zhang2019spurious}. Yet, it is highly desirable to transfer the scalability benefits of the BM factorization approach to these practical cases as well. Hence, it is essential to investigate how to handle problems that do not satisfy the RIP property with a constant smaller than $1/2$, using BM-type techniques.
Towards this end, an active line of research has studied the relationship between the complexity of recovering the global optimum and the degree of (over-) parametrization in \eqref{eq:bm_main} \cite{zhang2021sharp,zhang2022improved,levin2022effect}, and the results are promising.

The current idea of over-parametrization in matrix sensing consists of enlarging the search space of $X$ from $\RR^{n \times r}$ to $\RR^{n \times r_{\text{search}}}$, where $r_{\text{search}} \in [r,n)$, and we arrive at the following counterpart of \eqref{eq:bm_main}
\begin{equation}\label{eq:bm_overparam}
	\min_{X \in \mathbb{R}^{n \times r_{\text{search}}}} f(X) \coloneqq \frac{1}{2} \|\mathcal{A}(XX^T) - b\|^2.
\end{equation}
 The above-mentioned papers have shown that as $r_{\text{search}}$ increases, stronger guarantees for the recovery of $M^*$ can be obtained (although it requires stricter assumptions). One of the main results in this area will be stated below. 
\begin{theorem}[Theorem 1.1 of \citet{zhang2022improved}]\label{thm:richard_overparam}
	Assume that \eqref{eq:bm_overparam} satisfies the $(L_s,n)$-RSS (Restricted Strong Smoothness) and $(\alpha_s,n)$-RSC (Restricted Smooth Convexity) properties. If 
	\begin{equation} \label{eq:richard_ineq}
		r_{\text{search}} > \frac{1}{4} \left(\frac{L_s}{\alpha_s}-1 \right)^2 r, \quad r \leq r_{\text{search}} < n
	\end{equation}
	then every second-order point (SOP) $\hat X \in \RR^{n \times r_{\text{search}}}$ of \eqref{eq:bm_overparam} satisfies that $\hat X \hat X^\top = M^*$.
\end{theorem}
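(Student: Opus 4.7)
The plan is to derive a contradiction by constructing a perturbation direction along which the Hessian of $f$ at $\hat X$ is strictly negative, violating the SOP assumption, whenever $E := \hat X \hat X^\top - M^\ast \neq 0$. Throughout, set $H := \mathcal{A}^\ast \mathcal{A}(E)$ so that the first-order stationarity $\nabla f(\hat X) = 0$ reads $H \hat X = 0$; in particular $\langle H, \hat X \hat X^\top\rangle = 0$ and hence $\langle H, E\rangle = -\langle H, M^\ast\rangle$. For any $U \in \RR^{n \times r_{\text{search}}}$ the Hessian quadratic form is
\[
\nabla^2 f(\hat X)[U,U] = \|\mathcal{A}(\hat X U^\top + U \hat X^\top)\|^2 + 2\langle H, UU^\top\rangle,
\]
and the SOP condition requires this quantity to be nonnegative for every such $U$.

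First, I would pick the Ge--Jin--Zheng-style Procrustes direction $U := \hat X - Z^\ast R$, where $Z^\ast \in \RR^{n \times r_{\text{search}}}$ is a zero-padded factor of $M^\ast$ (so $Z^\ast (Z^\ast)^\top = M^\ast$, with $r_{\text{search}}-r$ columns set to zero) and $R$ is the orthogonal matrix that minimizes $\|\hat X - Z^\ast R\|_F$. A direct expansion yields the key algebraic identity $\hat X U^\top + U \hat X^\top = U U^\top + E$, and substituting into the Hessian and completing the square produces
\[
\nabla^2 f(\hat X)[U,U] = \|\mathcal{A}(UU^\top + 2E)\|^2 - 3\|\mathcal{A}(E)\|^2.
\]

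Next I would apply $(L_s,n)$-RSS to the first term and $(\alpha_s,n)$-RSC to the second. Combined with the SOP inequality this yields $L_s \|UU^\top + 2E\|_F^2 \geq 3\alpha_s \|E\|_F^2$. Expanding the left side as $\|UU^\top\|_F^2 + 4\langle UU^\top, E\rangle + 4\|E\|_F^2$ and bounding the Procrustes residual via an over-parametrized Tu-type estimate $\|U\|_F^2 \lesssim \|E\|_F^2 / \sigma_r(M^\ast)$ reduces the chain to a quadratic inequality in $\|E\|_F^2$. The extra $r_{\text{search}} - r$ free columns in $Z^\ast$ provide additional orthogonal slack that can be used, via a judicious choice of $R$, to align $UU^\top$ with $-E$ and thereby shrink the coefficient of $\|E\|_F^2$ on the left. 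The threshold $r_{\text{search}} > \tfrac{1}{4}(L_s/\alpha_s - 1)^2 r$ is precisely the value at which the effective coefficient on the left drops below $3\alpha_s/L_s$, forcing $\|E\|_F = 0$.

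The main obstacle is the Procrustes bound in the over-parametrized regime: the classical estimate $\|\hat X - Z^\ast R\|_F^2 \leq (\sqrt{2}-1)^{-1} \sigma_r(Z^\ast)^{-2} \|E\|_F^2$ degenerates once $r_{\text{search}} > r$ because the zero padding forces $\sigma_{r_{\text{search}}}(Z^\ast) = 0$, so $\|U\|_F$ cannot be controlled by applying a standard Procrustes lemma directly to $Z^\ast$. The remedy is to split $\hat X$ orthogonally with respect to the column span of $Z^\ast$: on the $r$-dimensional aligned block the standard Procrustes bound applies, while the orthogonal tail is a low-rank perturbation whose Frobenius norm can be absorbed into $\|E\|_F$ via a secondary eigenvalue estimate that exploits the excess dimension $r_{\text{search}} - r$. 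Propagating this refined bound through the RSS/RSC chain while tracking constants is the technical heart of the argument and produces the quadratic-in-$L_s/\alpha_s$ threshold.
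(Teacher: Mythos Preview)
This theorem is not proved in the present paper: it is quoted verbatim as ``Theorem 1.1 of \citet{zhang2022improved}'' and used only as motivation for the lifting framework. The appendix contains proofs of Theorems~\ref{thm:focp_norank}, \ref{thm:socp_norank}, \ref{thm:socp_corollary}, and \ref{thm:socp_z}, but there is no proof of Theorem~\ref{thm:richard_overparam} to compare your proposal against.

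That said, a comment on the proposal itself. The first part is clean and correct: the Hessian formula, the identity $\hat X U^\top + U\hat X^\top = UU^\top + E$ for $U=\hat X - Z^\ast R$ with $R$ orthogonal, and the completed-square expression $\|\mathcal A(UU^\top+2E)\|^2 - 3\|\mathcal A(E)\|^2$ all check out. The gap is everything after ``Next I would apply\ldots''. You invoke a Procrustes-type bound $\|U\|_F^2 \lesssim \|E\|_F^2/\sigma_r(M^\ast)$ and then suggest that the extra $r_{\text{search}}-r$ zero columns of $Z^\ast$ can be rotated via $R$ to ``align $UU^\top$ with $-E$''. But those columns are identically zero, so rotating them contributes nothing to $Z^\ast R$; the orthogonal freedom in $R$ acts trivially on the padded block and cannot be used to shape $UU^\top$. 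In other words, the single Procrustes direction $U=\hat X - Z^\ast R$ does not see the over-parametrization at all, and the mechanism you describe for how $r_{\text{search}}$ enters the final inequality is not the right one. The actual argument in \citet{zhang2022improved} exploits over-parametrization through rank deficiency of $\hat X$ at a stationary point (forcing small singular values of $\hat X$ once $r_{\text{search}}$ is large relative to $r$), not through extra rotational freedom in the ground-truth factor; this is what produces the $\tfrac14(L_s/\alpha_s-1)^2 r$ threshold. Your sketch would need to be substantially reworked at precisely the step you label the ``technical heart''.
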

Note that $\hat X$ is an SOP if it satisfies the first-order and second-order necessary optimality conditions. The above theorem replaces the RIP condition with the similar conditions of RSS and RSC, which will be formally defined in the next section. The power of this theorem is in dealing with the scenario where $\delta_{2r} \geq 1/2$, by selecting a search rank $r_{\text{search}}>r$.

Despite the superiority of \eqref{eq:bm_overparam} over \eqref{eq:bm_main}, the power of the stated over-parametrization is limited. The reason is that $r_{\text{search}}$ cannot be greater than $n$ and therefore it is impossible to satisfy the condition \eqref{eq:richard_ineq} in practical cases where $L_s/\alpha_s$ is large. This calls for a new framework that accommodates an arbitrarily large degree of parametrization (as opposed to $r_{\text{search}} < n$), which would be effective in the regime of high $L_s/\alpha_s$ values. In this paper, we address this problem by proposing a tensor-based framework and analyzing its optimization landscape.

Our approach is related to over-parametrization used in the Semidefinite Programming (SDP) formulation. The SDP formulation is a natural convex relaxation of the original problem \eqref{eq:SDP main}, obtained by removing the rank constrained. It aims to minimize the nuclear norm of $M$ as a surrogate of its rank. When the search is performed on the space of symmetric and positive-semidefinite matrices, we can further reformulate the problem using a trace objective instead of the nuclear norm due to their equivalence under this setting. Hence, the resulting SDP formulation can be stated as
\begin{align} \label{eq:sdp_trace}
    \underset{{\bf M} \in \mathbb{R}^{n \times n}}{\mathrm{min}} &\quad \mathrm{tr}({\bf M}) \qquad
    \mathrm{s.t.} \ \ \mathcal{A}(\mathbf{M}) = b, \  \mathbf{M} \succeq 0, 
\end{align}
The relation $\delta_{2r} <1/2$ is a sufficient condition for the recovery of $M^*$ via \eqref{eq:sdp_trace}, but not a necessary one \cite{cai2013sharp}. Recently, \citet{yalcin2022semidefinite} showed that a sufficient bound close to $\delta_{2r} \leq 1$ can be achieved when $n \approx 2r$, which proves that the formulation \eqref{eq:sdp_trace} may solve the problem even if the sharp bound $\delta_{2r} \geq 1/2$ for \eqref{eq:bm_main} is not satisfied as long as $r$ is a large number (note that the focus of this paper is on the practical scenario of a small rank $r$). 

Overall, over-parametrization is a powerful idea since the inclusion of extra variables reshapes the landscape of the problem. Outside the realm of matrix sensing, the idea of constructing an infinite hierarchy of non-convex problems of increasing dimensions has been applied to the Tensor PCA problem \cite{wein2019kikuchi}. The empirical evidence of deep learning practice shows the advantage of using overparametrized models for both convergence properties during training \cite{oymak2020toward, zou2020gradient, du2019gradient, allen2019convergence} and generalization performance of the trained model \cite{allen2019learning, neyshabur2018the, mei2022generalization, belkin2020two}. Practitioners also design their own hierarchy of machine learning models, to satisfy the scaling laws \cite{kaplan2020scaling, hoffmann2022training, maloney2022solvable}. The cornerstone idea on the theoretical side of this field comes from the development of a hierarchy of convex problems, called the Sum-of-Squares hierarchy.

\subsection{Sum-of-Squares Optimization}

One of the most prominent over-parametrization frameworks for polynomial optimization is the framework of Sum-of-Squares (SOS) hierarchy of optimization problems \cite{parrilo2003semidefinite,lasserre2001global}. SOS optimization is essentially an optimization framework that leverages deep results in algebraic geometry to construct a hierarchy of convex problems of increasing qualities, solving each of which obtains a lower-bound certificate on the minimum value of the polynomial optimization problem of interest. Since \eqref{eq:bm_main} is also a polynomial optimization problem, SOS can be applied to handle the problem through a highly parametrized setting. Moreover, instead of using the usual SOS framework that finds a sequence of lower bounds on the optimal value of \eqref{eq:bm_main}, we could use its dual problem, since the minimum value of \eqref{eq:bm_main} is $0$ by construction. To construct the dual SOS problem, define $\kappa \geq 1$ to be an integer such that $2\kappa$ is equal to or larger than the maximum degree of $f(v)$ in \eqref{eq:bm_main}, where $v \coloneqq \vecc(X)$. Furthermore, define $[v]_\kappa \in \RR^s$ to be a vector containing the standard monomials of $v$ up to degree $\kappa$, with $s \coloneqq \binom{n+\kappa}{\kappa}$. We then build the moment matrix $D \coloneqq [v]_\kappa [v]_\kappa^\top$ with its entries being all standard monomials up to degree $2\kappa$. As a result, it is possible to rewrite $f(v)$ (i.e., $f(X)$) as a linear function of $D$, namely
\[
	f(v) = \langle F, D \rangle
\]
for some constant matrix $F \in \RR^{s \times s}$. Therefore, optimizing $\langle F, D \rangle$ is equivalent to optimizing $f(v)$ given that $D$ is rank-1 and positive-semidefinite. However, the rank-1 constraint is non-convex and its elimination leads to the dual SOS problem with the following form:
\begin{equation}\label{eq:sos_main}
	\min_{D \in \mathbb{S}^s}  \langle F, D \rangle \ \text{s.t.} \ \mathcal{L}(D) = 0, D \succeq 0
\end{equation}
The linear operator $\mathcal{L}$ captures the so-called consistency constraints, as some entries in $D$ may be identical due to being the outer product of monomial vectors. For example, if $n=2, \kappa=2$, we have
\[
	[v]_\kappa = [1, v_1, v_2, v_1^2, v_1 v_2, v_2^2 ]^\top
\]
meaning that $D_{15} = D_{23} = v_1v_2$, $D_{14} = D_{22} = v_1^2$, $D_{34} = D_{25} = v_1^2 v_2$, $D_{26} = D_{35} = v_1v_2^2$, and so on. The dual SOS problem \eqref{eq:sos_main} has some nice properties: it is convex and its optimal value asymptotically reaches that of \eqref{eq:bm_main} as $\kappa$ grows to infinity (under generic conditions), which enables solving the non-convex \eqref{eq:bm_main} with an arbitrary accuracy \cite{lasserre2001global}. However, the problem \eqref{eq:sos_main} also presents daunting challenges. 

First, it has poor scalability properties because it requires solving costly SDP problems. The idea behind this paper is related to applying the BM factorization to \eqref{eq:bm_main} (without dealing with SDPs) via a lifting technique similar to \eqref{eq:sos_main}. Currently, there is no guarantee that local minimizers of the BM formulation will translate to the minimizer of the convex problem \eqref{eq:sos_main}. The state-of-the-art result regarding the BM factorization states that this correspondence can be established only when $r(r+1)/2 \geq m$, where $m$ is the number of linear constraints \cite{boumal2016nonconvex}. In matrix sensing, since $r$ is small and $m$ is large, this result cannot be applied. 

Second, it is difficult to gauge how large $\kappa$ needs to be in order for the convex relaxation to be exact, meaning that one may need to use significant computational resources to solve an instance of \eqref{eq:sos_main} corresponding to some value of $\kappa$, only to discover that its solution does not provide useful information about the optimal solution of the original problem, promoting to repeat the process for a larger value of $\kappa.$ This also prevents the practical application of SOS as it is common to miscalculate in advance how computationally challenging it can be to solve \eqref{eq:bm_main} via the SOS framework.

\subsection{Our Approach}

In this paper, we build upon some of the core ideas of SOS optimization in order to construct a new framework for over-parametrization that addresses the current issues with \eqref{eq:sos_main}. The key observation is that $[v]_{\kappa}$ is highly similar to a symmetric rank-1 tensor, namely
\[
	[v]_{\kappa} \approx v^{\otimes \kappa} \in \RR^{n \circ \kappa}
\]
with the only difference being that $v^{\otimes \kappa}$ contains some terms appearing more than once, which implies that \eqref{eq:sos_main} could also be casted as an SDP based on the outer product of $v^{\otimes \kappa}$ with itself. The notion of a tensor and its properties (symmetry, rank, etc) will be formally introduced in Section \ref{sec:def} and Appendix A. Instead of solving a non-scalable SDP problem for the optimal $D$ over $\mathbb S^s$; we propose to apply local search over $\RR^{n \circ \kappa}$ for $v^{\otimes \kappa}$, and will analyze when it converges to the global optimum.

We first focus our attention to the $r=1$ case, which is easier to conceptualize. The idea is that we replace $\RR^n$ with $\RR^{n \times \dots \times n}$, meaning that we replace our decision variable with an $l$-way tensor $\mb{x} \in \RR^{n \times \dots \times n}$. We will show that this new problem can convert spurious solutions of the original problem to strict saddle points in the lifted tensor space, and we further derive how large $l$ should be in order for this to occur, thereby addressing two main practical deficiencies of \eqref{eq:sos_main}.

\section{Definitions and Notations}
\label{sec:def}
The formal definition of a tensor, alongside with the property of symmetry and rank will be elaborated in Appendix A.

\begin{definition}[Tensor Multiplication]
	Outer product is an operation carried out on a couple of tensors, denoted as $\otimes$. The outer product of 2 tensors $\mathbf{a}$ and $\mathbf{b}$, respectively of orders $l$ and $p$, is a tensor of order $l+p$, $\mb{c} = \mb{a} \otimes \mb{b}$ with
	\[
		c_{i_1\dots i_l j_1 \dots j_p} = a_{i_1\dots i_l} b_{j_1 \dots j_p}
	\]
	When the 2 tensors are of the same dimension, this product is such that $\otimes: \RR^{n \circ l} \times \RR^{n \circ p} \mapsto \RR^{n \circ (l+p)}$. Note we often use the shorthand
	\[
		 \underbrace{a \otimes \dots \otimes a}_{l \ \text{times}}\coloneqq  a^{\otimes l}
	\]
	We also define an inner product of two tensors. The mode-$q$ inner product between the 2 aforementioned tensors having the same $q$-th dimension is denoted as $\langle \mb{a}, \mb{b} \rangle_q$. Without loss of generality, assume $q = 1$ and 
	\[
		\left[ \langle \mb{a}, \mb{b} \rangle_q \right]_{i_2\dots i_l j_2 \dots j_p} = \sum_{\alpha=1}^{n_q} a_{\alpha i_2\dots i_l} b _{\alpha j_2 \dots j_p}
	\]
	Note that when we write $\langle \cdot, \cdot \rangle_q$, we count the $q$-th dimension of the first entry. Indeed, this definition of inner product can also be trivially extended to multi-mode inner products by just summing over all modes, denoted as $\langle \mb{a}, \mb{b} \rangle_{q, \dots, s}$.
\end{definition}

\begin{definition}[restricted strong smoothness (RSS)]
	The linear operator $\mathcal{A}: \RR^{n \times n} \mapsto \RR^m$ satisfies the $(L_s,r)$-RSS, property if:
	\[
		f(M) - f(N) \leq \langle M - N, \nabla f(M) \rangle + \frac{L_s}{2} \|M-N\|^2_F
	\]
	for all $M,N \in \mathbb{S}^n$ with $\rk(M), \rk(N) \leq r$.
\end{definition}

\begin{definition}[restricted strong convexity (RSC)]
	The linear operator $\mathcal{A}: \RR^{n \times n} \mapsto \RR^m$ satisfies the $(\alpha_s,r)$-RSC property if:
	\[
		f(M) - f(N) \geq \langle M - N, \nabla f(M) \rangle + \frac{\alpha_s}{2} \|M-N\|^2_F
	\]
	for all $M,N \in \mathbb{S}^n$ with $\rk(M), \rk(N) \leq r$.
\end{definition}


\subsection{Notations}

In this paper, $I_n$ refers to the identity matrix of size $n \times n$. The notation $M \succeq 0$ means that $M$ is a symmetric and positive semidefinite (PSD) matrix. $\mathbb{S}^n$ denotes the symmetric PSD space of dimension n. $\sigma_i(M)$ denotes the $i$-th largest singular value of a matrix $M$, and $\lambda_i(M)$ denotes the $i$-th largest eigenvalue of $M$. $\norm{v}$ denotes the Euclidean norm of a vector $v$, while $\norm{M}_F$ and $\norm{M}_2$ denote the Frobenius norm and induced $l_2$ norm of a matrix $M$, respectively. $\langle A,B \rangle$ is defined to be $\tr(A^TB)$ for two matrices $A$ and $B$ of the same size. For a matrix $M$, $\vecc(M)$ is the usual vectorization operation by stacking the columns of the matrix $M$ into a vector. For a vector $v \in \RR^{n^2}$, $\mat(v)$ converts $v$ to a square matrix and $\mat_S(v)$ converts $v$ to a symmetric matrix, i.e., $\mat(v)=M$ and $\mat_S(v) = (M+M^T)/2$, where $M \in \RR^{n \times n}$ is the unique matrix satisfying $v=\vecc(M)$. $[n]$ denotes the integer set $[1,\dots,n]$, and $\circ l$ stands for the shorthand of repeated cartesian product $\times \dots \times$ for $l$ times. $\mathcal N(\mu,\mathbf\Sigma)$ refers to the multivariate Gaussian distribution with mean $\mu$ and covariance $\mathbf\Sigma$.

\section{The lifted formulation}

To streamline the presentation, we focus on the problem of rank-1 matrix sensing  presented in the BM formulation:

\begin{equation}\label{eq:unlifted_main}
	\min_{x \in \RR^n} \quad \|\mathcal{A}(xx^\top - zz^\top)\|^2_2
\end{equation}
where $M^* = zz^\top$ is the ground truth rank-1 matrix. The generalization of the ideas to $r>1$ is straightforward but the mathematical notations will be cumbersome. 


The objective is to solve \eqref{eq:unlifted_main} using a lifted or over-parametrized framework. This means that instead of optimizing over the original vector space $\RR^n$, the goal is to optimize over a tensor space, namely $\RR^{n \circ l}$ for some $l \geq 2$. Note that \eqref{eq:unlifted_main} aims to find a vector $x$ such that
\[
	\mathcal{A}(xx^\top) = \mathcal{A}(M^*) = \mathcal{A}(zz^\top) \coloneqq b.
\]
Therefore, it is also desirable to achieve
\[
	\{\mathcal{A}(xx^\top)\}^{\otimes l} = b^{\otimes l} \in \RR^{m \circ l}
\] 
Define $\mathbf{B} \coloneqq \{\mathcal{A}(xx^\top)\}^{\otimes l}$. According to \cite{petersen2008matrix}, for arbitrary 4 vectors $a,b,c,d$ of the same dimension it holds that

\begin{equation}\label{eq:kron_iden}
	\langle a \otimes b, c\otimes d \rangle = \langle a , c \rangle \langle b, d\rangle
\end{equation}

With the repeated application of the above identity, we have that
\begin{equation}
	\begin{aligned}
		&B_{m_1\dots m_l} = \langle \prod_{k=1}^l \otimes \vecc(A_{m_k}), x^{\otimes l} \otimes x^{\otimes l} \rangle \\
		&= \sum_{i_1,\dots,i_l, j_1, \dots j_l}^n A_{m_1i_1j_1}\cdots A_{m_li_lj_l}(x_{i_1}\cdots x_{i_l}x_{j_1}\cdots x_{j_l})
	\end{aligned}
\end{equation}
Therefore, by defining the tensor $\mb{A} \in \RR^{(m \circ l) \times (n^2 \circ l)}$ as:
\begin{equation}\label{eq:lifted_a_def}
	A_{m_1\dots m_l} =  \prod_{k=1}^l \otimes \vecc(A_{m_k}),
\end{equation}
one can write the lifted objective similarly to \eqref{eq:unlifted_main}:
\begin{equation}\label{eq:lifted_main}
	\min_{\mb{w} \in \RR^{n \circ l}} \quad \| \langle \mb{A}, \mb{w} \otimes \mb{w} - z^{\otimes l} \otimes z^{\otimes l}\rangle_{m^l+1,\dots,m^l+n^{2l}} \|^2_F
\end{equation}

For notational convenience, define $f(\cdot): \RR^{n \times n} \mapsto \RR$ and $h(\cdot): \RR^{n} \mapsto \RR$ as:
\[
	f(M) \coloneqq \|\mathcal{A}(M - zz^\top)\|^2_2, \qquad h(x) = f(xx^\top),
\]
and $\nabla f(\cdot) = \nabla_M f(\cdot)$ and $\nabla h(\cdot) = \nabla_x h(\cdot)$. 

Similarly, define $f^l(\cdot): \RR^{n \circ 2l} \mapsto \RR$ and $h^l(\cdot): \RR^{n \circ l} \mapsto \RR$ as:
\begin{equation}
	\begin{aligned}
		&f^l(\mb{M}) \coloneqq  \| \langle \mb{A}, \mb{M} - z^{\otimes l} \otimes z^{\otimes l}\rangle_{m^l+1,\dots,m^l+n^{2l}} \|^2_F, \\ &h^l(\mb{w}) = f^l(\mb{w} \otimes \mb{w}),
	\end{aligned}
\end{equation}
as well as $\nabla f^l(\cdot) = \nabla_\mb{M} f^l(\cdot)$ and $\nabla h^l(\cdot) = \nabla_\mb{w} h^l(\cdot)$.

%
\begin{table*}[ht]
\centering
\renewcommand*{\arraystretch}{1}
\caption{The smallest eigenvalue of the Hessian of lifted SOPs of \eqref{eq:unlifted_main} }
\begin{tabular}{llllll}
\toprule
$n$ & $l$ & $\|\nabla h^l(z^{\otimes l})\|_F$ & $\|\nabla h^l(\hat x^{\otimes l})\|_F$ &$\lambda_{\min}(\nabla^2 h^l(z^{\otimes l}))$ & $\lambda_{\min}(\nabla^2 h^l(\hat x^{\otimes l}))$ \\
\midrule
3 & 1 & 0 & 0 & 3.99 & 2.67\\
3 & 2 & 0 & 0.003 & 3.99 & 0.61\\
3 & 3 & 0.004 & 0.002 & 3.99 & 0.24\\
3 & 4 & 0.006 & 0.004 & 3.99 & -0.17\\
5 & 1 & 0 & 0 & 4.18 & 1.87 \\
5 & 2 & 0.002 & 0 & 4.56 & -0.81 \\
7 & 1 & 0.002 & 0 & 4.35 & 1.89 \\
7 & 2 & 0.041 & 0 & 5.16 & -1.64 \\
\bottomrule
\end{tabular}
\label{table:Hessian_eigs}

\end{table*}

\section{A motivating example}

In this section, we study a class of benchmark matrix sensing instances that have many spurious local minima, where each instance $\mathcal{A}$ is defined as
\begin{equation} \label{eq:operator-ms}
    \mathcal{A}_{\epsilon}({\bf M})_{ij} := \begin{cases} {\bf M}_{ij}, & \text{if } (i,j) \in \Omega\\ \epsilon \mathbf{M}_{ij}, & \text{otherwise} \end{cases},
\end{equation}
where $\Omega$ is a measurement set such that
\[
	\Omega = \{ (i,i), (i,2k), (2k,i) | \ \ \forall i \in [n], k \in [ \floor{n/2}]\}
\]
\citet{yalcin2022semidefinite} has proved that each such instance has $\mathcal{O}(2^{\ceil{n/2}}-2)$ spurious local minima, while it satisfies the RIP property with $\delta_{2r} = (1-\epsilon)/(1+\epsilon)$ for some sufficiently small $\epsilon$.

To study whether our lifted framework can reshape the optimization landscape of the problem, we analyze the spurious local minima of the unlifted problem \eqref{eq:unlifted_main}. Given any spurious local minimum $\hat x$, it is essential to understand whether its lifted counterpart $\hat x^{\otimes l}$ behaves differently in \eqref{eq:lifted_main}, or more precisely whether $\hat x^{\otimes l}$ is still a spurious solution. To get some insight into this question, we conduct numerical experiments to first find the spurious solutions of \eqref{eq:unlifted_main} for the measurement matrices given in \eqref{eq:operator-ms}, and then find the smallest eigenvalue of the Hessian of \eqref{eq:lifted_main} at the lifted counterpart of each spurious solution. We summarize the findings in Table \ref{table:Hessian_eigs} for $\epsilon = 0.3$. Note that due to the structure of \eqref{eq:operator-ms}, the numbers of spurious local minimizers are equal for two cases $n$ and $n+1$ if $\ceil{n/2} = \ceil{(n+1)/2}$, and therefore the results for $n=4$ and $n=6$ are omitted.

It can be observed that, for a given spurious local minimizer $\hat x$ of \eqref{eq:unlifted_main}, two properties hold: (i) $\hat x^{\otimes l}$ is still a critical point as the gradient of the corresponding objective function $h^l$ is small (its nonzero value is due to the early stopping of the numerical algorithm), (ii) the Hessian at this point becomes smaller as $l$ increases. This means that as the degree of over-parametrization increases, the unlifted spurious local minima will become less of a local minima and more of a strict saddle point. This can be seen for $n=3$, as every increase in the parametrization leads to a reduced smallest eigenvalue and finally, $\hat x^{\otimes l}$ becomes a saddle point with a negative eigenvalue at level $l=4$, meaning that there is a viable escape direction for gradient descent algorithms. This trend can also be clearly observed for $n=5$ and $n=7$, implying that the transformation of the geometric properties at $\hat x^{\otimes l}$ is not an isolated phenomenon. To further study how much parametrization is needed and to show that this is not unique to any particular problem form, we provide theoretical results next.


\section{Optimization Landscape of the Lifted Problem}
\label{sec:landscape}


We analyze the optimization landscape of \eqref{eq:lifted_main} around $\hat x^{\otimes l}$, where $\hat x$ is a spurious spurious solution of \eqref{eq:unlifted_main}. The proofs to all of the results below can be found in Appendix B.

\subsection{FOP and SOP conditions}

\begin{lemma}\label{lem:cp_unlifted}
	The vector $\hat x$ is a SOP of \eqref{eq:unlifted_main} if and only if
	
\begin{subequations}
		\begin{gather}
		\nabla f(\hat x \hat x^\top)\hat x = 0, \label{eq:focp_unlifted} \\
		\begin{split}
		&2 \langle \nabla f(\hat x \hat x^\top ), u u^\top \rangle + \\
		[\nabla^2 f(\hat x \hat x^\top)](\hat x &u^\top + u \hat x^\top,\hat x u^\top + u \hat x^\top ) \geq 0		
		\end{split}\label{eq:socp_unlifted}
		\end{gather}
\end{subequations}

	$\forall u \in \RR^n $, with \eqref{eq:focp_unlifted} being the necessary and sufficient condition for $\hat x$ to be a first-order point (FOC), which is a stationary point of the objective function.
\end{lemma}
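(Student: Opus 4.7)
The plan is to derive both conditions from a direct second-order Taylor expansion of $h$ around $\hat x$, exploiting the fact that $f$ itself is quadratic in $M$, so its Taylor expansion is exact and the analysis reduces to bookkeeping.

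First, for the first-order condition I will apply the chain rule to $h(x)=f(xx^\top)$. Since $\frac{d}{dt}\bigl[(\hat x + tu)(\hat x+tu)^\top\bigr]\big|_{t=0} = \hat x u^\top + u\hat x^\top$, I get $\nabla h(\hat x)^\top u = \langle \nabla f(\hat x\hat x^\top),\, \hat x u^\top + u\hat x^\top\rangle = 2\langle \nabla f(\hat x\hat x^\top)\hat x,\, u\rangle$, where I use symmetry of $\nabla f(\hat x\hat x^\top)$ (coming from the fact that $f$ depends on the symmetric $M$). Demanding that this vanish for all $u\in\RR^n$ gives exactly \eqref{eq:focp_unlifted}, and the standard equivalence of stationarity with $\nabla h(\hat x)=0$ yields the FOP claim.

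For the second-order condition I will Taylor expand
\begin{equation*}
h(\hat x + tu) \;=\; f\!\left(\hat x \hat x^\top + t(\hat x u^\top + u\hat x^\top) + t^2 u u^\top\right)
\end{equation*}
using the exact quadratic expansion $f(M+\Delta) = f(M) + \langle \nabla f(M),\Delta\rangle + \tfrac{1}{2}[\nabla^2 f(M)](\Delta,\Delta)$ with $\Delta = t(\hat x u^\top + u\hat x^\top) + t^2 uu^\top$. Collecting the coefficient of $t^2$ yields
\begin{equation*}
\tfrac{1}{2}u^\top \nabla^2 h(\hat x) u \;=\; \langle \nabla f(\hat x\hat x^\top), uu^\top\rangle + \tfrac{1}{2}[\nabla^2 f(\hat x\hat x^\top)](\hat x u^\top + u\hat x^\top,\,\hat x u^\top + u\hat x^\top),
\end{equation*}
since the cross term between the linear part of $\Delta$ and the $t^2 uu^\top$ piece of $\Delta$ contributes only at order $t^3$ and beyond. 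The standard second-order necessary and sufficient conditions for a SOP of an unconstrained smooth problem state that $u^\top \nabla^2 h(\hat x) u \geq 0$ for all $u$, which after multiplying the displayed identity by $2$ is exactly \eqref{eq:socp_unlifted}.

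The main subtlety, and the only place I expect to have to be careful, is the treatment of symmetry: $\nabla f$ is a matrix-valued object and one has to justify passing between $\langle \nabla f(M), \hat x u^\top + u\hat x^\top\rangle$ and $2\langle \nabla f(M)\hat x, u\rangle$ (i.e.\ showing the gradient representative can be taken symmetric so that the two terms $\langle \nabla f(M),\hat x u^\top\rangle$ and $\langle \nabla f(M), u\hat x^\top\rangle$ coincide). This follows from $f$ depending on $M$ only through the symmetric part, but it is worth spelling out explicitly to avoid a stray factor of $2$ between the two conditions. Once this is handled, the lemma follows directly from the Taylor expansion above.
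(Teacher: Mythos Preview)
Your proposal is correct and is the standard derivation of these conditions. The paper itself does not supply a proof of this lemma; it is stated as a known characterization (the chain-rule computation you outline is the usual one appearing in the matrix-sensing literature, e.g.\ \cite{bi2020global,zhang2021general}), and the paper simply uses it as input to the later arguments. Your handling of the symmetry of $\nabla f(\hat x\hat x^\top)$ is exactly the point the paper invokes elsewhere (``$\nabla f(M)$ can be assumed to be symmetric without loss of generality''), so there is no discrepancy.
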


Lemma \ref{lem:cp_unlifted} has a counterpart in the lifted space since \eqref{eq:unlifted_main} and \eqref{eq:lifted_main} are highly similar. This will be formalized below. 
\begin{lemma}\label{lem:cp_lifted}
	The tensor $ \mb{\hat w}$ is a SOP of \eqref{eq:lifted_main} if and only if
	\begin{subequations}
		\begin{gather}
			\langle \nabla f^l( \mb{\hat w} \otimes  \mb{\hat w}) ,\mb{\hat w}\rangle_{n^l+1, \dots ,n^{2l}} = 0, \label{eq:focp_lifted}\\
		\begin{split}
			&2 \langle \nabla f(\mb{\hat w} \otimes  \mb{\hat w}), \Delta \Delta^\top \rangle +[\nabla^2 f(\mb{\hat w} \otimes  \mb{\hat w})] \\
			 (\mb{\hat w} \otimes \Delta + &\Delta \otimes \mb{\hat w},\mb{\hat w} \otimes \Delta + \Delta \otimes \mb{\hat w} ) \geq 0 \quad \forall \Delta \in \RR^{n \circ l}
		\end{split}
		\label{eq:socp_lifted}
		\end{gather}
	\end{subequations}
	with \eqref{eq:focp_lifted} being a necessary and sufficient condition for $\hat x$ to be a FOP.
\end{lemma}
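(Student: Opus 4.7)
The plan is to mirror the derivation of Lemma~\ref{lem:cp_unlifted}, replacing the matrix outer product $xx^\top$ with the tensor outer product $\mb{w}\otimes\mb{w}$. Since $h^l(\mb{w})=f^l(\mb{w}\otimes\mb{w})$ has the same compositional form as $h(x)=f(xx^\top)$, the first- and second-order necessary and sufficient conditions for $\mb{\hat w}$ to be an SOP of $h^l$ can be extracted directly from a chain-rule Taylor expansion, analogous to the argument in the unlifted case.

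First, I would perturb $\mb{\hat w}$ by $\epsilon\Delta$ with $\Delta\in\RR^{n\circ l}$, expand
\[
(\mb{\hat w}+\epsilon\Delta)\otimes(\mb{\hat w}+\epsilon\Delta)=\mb{\hat w}\otimes\mb{\hat w}+\epsilon(\mb{\hat w}\otimes\Delta+\Delta\otimes\mb{\hat w})+\epsilon^2\,\Delta\otimes\Delta,
\]
and substitute this into the second-order Taylor expansion of $f^l$ around $\mb{\hat w}\otimes\mb{\hat w}$. Collecting powers of $\epsilon$ gives a linear coefficient equal to $\langle\nabla f^l(\mb{\hat w}\otimes\mb{\hat w}),\,\mb{\hat w}\otimes\Delta+\Delta\otimes\mb{\hat w}\rangle$ and a quadratic coefficient equal to $\langle\nabla f^l(\mb{\hat w}\otimes\mb{\hat w}),\,\Delta\otimes\Delta\rangle+\tfrac{1}{2}[\nabla^2 f^l(\mb{\hat w}\otimes\mb{\hat w})](\mb{\hat w}\otimes\Delta+\Delta\otimes\mb{\hat w},\,\mb{\hat w}\otimes\Delta+\Delta\otimes\mb{\hat w})$. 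Requiring the linear term to vanish for every $\Delta$ yields the FOP condition; requiring twice the quadratic term to be nonnegative for every $\Delta$ is precisely \eqref{eq:socp_lifted}. Sufficiency of the two conditions comes from rerunning the expansion: when both \eqref{eq:focp_lifted} and \eqref{eq:socp_lifted} hold, the right-hand side is nonnegative up to $o(\epsilon^2)$ in every direction, which matches the definition of an SOP.

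To pass from the raw linear condition to the compact form \eqref{eq:focp_lifted}, I would use the block symmetry of $\nabla f^l(\mb{\hat w}\otimes\mb{\hat w})$ under swapping its first $n^l$ indices with its last $n^l$ indices, which is inherited from the symmetric dependence of $f^l$ on its factored argument $\mb{w}\otimes\mb{w}$ together with the index structure of $\mb{A}$ in \eqref{eq:lifted_a_def}. This symmetry equates $\langle\nabla f^l,\mb{\hat w}\otimes\Delta\rangle$ and $\langle\nabla f^l,\Delta\otimes\mb{\hat w}\rangle$, so the linear term vanishes for all $\Delta\in\RR^{n\circ l}$ if and only if the contracted tensor $\langle\nabla f^l(\mb{\hat w}\otimes\mb{\hat w}),\mb{\hat w}\rangle_{n^l+1,\dots,n^{2l}}\in\RR^{n\circ l}$ equals zero, which is exactly \eqref{eq:focp_lifted}.

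The main obstacle I anticipate is tensor bookkeeping: carefully verifying the block symmetry above from the definition of $\mb{A}$, and keeping track of which index groups are contracted in each multi-mode inner product so that the tensor-valued equation in \eqref{eq:focp_lifted} matches the quantifier "for all $\Delta$" coming from the Taylor argument. Once this is in place, both the FOP and SOP characterizations follow from Taylor's theorem applied to the composition $f^l\circ(\mb{w}\mapsto\mb{w}\otimes\mb{w})$, with no appeal to RIP or RSS/RSC at this stage; these stronger assumptions will only be needed later, when turning the second-order condition into a strict-saddle statement.
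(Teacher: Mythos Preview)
Your approach is correct and is exactly the natural way to establish the lemma. Note, however, that the paper does not actually supply a proof of Lemma~\ref{lem:cp_lifted} (nor of its unlifted counterpart Lemma~\ref{lem:cp_unlifted}): the appendix only contains proofs of Theorems~\ref{thm:focp_norank}, \ref{thm:socp_norank}, \ref{thm:socp_corollary}, \ref{thm:socp_z} and of Lemma~\ref{lem:xhat_upper}. The paper simply asserts the two optimality lemmas as direct analogs of one another, remarking that ``Lemma~\ref{lem:cp_unlifted} has a counterpart in the lifted space since \eqref{eq:unlifted_main} and \eqref{eq:lifted_main} are highly similar.'' Your chain-rule/Taylor expansion of $h^l(\mb{w})=f^l(\mb{w}\otimes\mb{w})$ together with the block symmetry of $\nabla f^l$ inherited from \eqref{eq:lifted_a_def} is precisely the computation that justifies this assertion, so your proposal fills in what the paper leaves implicit.
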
		

\subsection{Optimality condition of lifted problem, symmetric rank-1 constraint}	

\begin{theorem}\label{thm:focp_norank}
	For \eqref{eq:lifted_main}, the equality $\nabla h^l(\mb{\hat w}) = 0$ holds if
	\[
		\mb{\hat w} = \hat x^{\otimes l}
	\] 
	where $\hat x \in \RR^n$ is an FOP of \eqref{eq:unlifted_main}.
\end{theorem}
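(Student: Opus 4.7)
The plan is to exploit the rank-1 tensor structure of $\mb{\hat w} = \hat x^{\otimes l}$ in order to reduce the lifted first-order condition to its unlifted counterpart \eqref{eq:focp_unlifted}. By Lemma \ref{lem:cp_lifted}, the desired identity $\nabla h^l(\mb{\hat w}) = 0$ is equivalent to
\begin{equation*}
\langle \nabla f^l(\mb{\hat w}\otimes\mb{\hat w}),\mb{\hat w}\rangle_{n^l+1,\dots,n^{2l}} = 0,
\end{equation*}
so I would target this contraction identity directly. Write $\Phi(\cdot) \coloneqq \langle \mb{A},\cdot\rangle_{m^l+1,\dots,m^l+n^{2l}}$ and $b' \coloneqq \mathcal{A}(\hat x\hat x^\top)$. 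The first step is to unwind the definition \eqref{eq:lifted_a_def} of $\mb{A}$: because $\mb{\hat w}\otimes\mb{\hat w} = \hat x^{\otimes 2l}$ is rank-1, $l$ successive applications of \eqref{eq:kron_iden} cause the $2l$-fold sum to factor into a product of $l$ one-dimensional sums and yield $\Phi(\hat x^{\otimes 2l}) = (\mathcal{A}(\hat x\hat x^\top))^{\otimes l} = b'^{\otimes l}$. Consequently $\nabla f^l(\hat x^{\otimes 2l}) = 2\,\Phi^*(b'^{\otimes l} - b^{\otimes l})$, where $\Phi^*$ is the adjoint of $\Phi$.

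Next, the final contraction against $\hat x^{\otimes l}$ can be pushed through $\Phi^*$. On a rank-1 input $y_1 \otimes \cdots \otimes y_l \in \RR^{m \circ l}$, the Kronecker structure of $\mb{A}$ forces $\Phi^*$ to act factor by factor, producing a tensor whose $k$-th $n^2$-sized slot is the matrix $\mathcal{A}^*(y_k)$. Contracting the ``right-half'' $n$-sized modes against $\hat x^{\otimes l}$ then triggers a second use of \eqref{eq:kron_iden}, yielding
\begin{equation*}
\langle \Phi^*(y_1 \otimes \cdots \otimes y_l),\hat x^{\otimes l}\rangle_{n^l+1,\dots,n^{2l}} = (\mathcal{A}^*(y_1)\hat x) \otimes \cdots \otimes (\mathcal{A}^*(y_l)\hat x).
\end{equation*}
Applying this to the two rank-1 summands $b'^{\otimes l}$ and $b^{\otimes l}$ and using linearity of $\Phi^*$ produces
\begin{equation*}
\langle \nabla f^l(\hat x^{\otimes 2l}),\hat x^{\otimes l}\rangle_{n^l+1,\dots,n^{2l}} = 2\bigl[(\mathcal{A}^*(b')\hat x)^{\otimes l} - (\mathcal{A}^*(b)\hat x)^{\otimes l}\bigr].
\end{equation*}

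Finally, I invoke the hypothesis: since $\nabla f(\hat x\hat x^\top) = 2\mathcal{A}^*(b'-b)$, the unlifted FOP condition \eqref{eq:focp_unlifted} is exactly $\mathcal{A}^*(b')\hat x = \mathcal{A}^*(b)\hat x$. The two rank-1 tensors above therefore coincide, their difference vanishes, and $\nabla h^l(\hat x^{\otimes l}) = 0$. I expect the main obstacle not to be conceptual but notational: the $n^2$-sized modes of $\mb{A}$ each split into two $n$-sized axes, and one must track carefully which ``column'' index of each sensing matrix $A_{m_k}$ is being paired with the contracting copy of $\hat x$ versus with a copy coming from $\mb{\hat w}\otimes\mb{\hat w}$. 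Once that pairing is fixed, the whole argument reduces to two applications of \eqref{eq:kron_iden} plus linearity of $\Phi^*$, and the structural message is that the rank-1 form of $\hat x^{\otimes l}$ makes every $l$-fold tensor contraction factor into $l$ independent copies of the one-dimensional identity supplied by \eqref{eq:focp_unlifted}.
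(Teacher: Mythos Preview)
Your proposal is correct and follows essentially the same route as the paper's proof: both exploit the rank-1 structure of $\hat x^{\otimes l}$ so that the lifted first-order contraction factors into an $l$-fold product of copies of the unlifted identity \eqref{eq:focp_unlifted}, which then forces the two products (the $\hat x\hat x^\top$-term and the $zz^\top$-term) to agree. The only difference is packaging: the paper carries out the computation in explicit index notation, writing the LHS of \eqref{eq:focp_lifted} as a difference of two products $\prod_{\alpha} (\cdots)$ and matching factor by factor, whereas you phrase the same factorization via the adjoint $\Phi^*$ and the identity $\Phi^*(y_1\otimes\cdots\otimes y_l)=\mathcal{A}^*(y_1)\otimes\cdots\otimes\mathcal{A}^*(y_l)$ before contracting with $\hat x^{\otimes l}$.
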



Theorem \ref{thm:focp_norank} theoretically confirms the phenomenon that we observed in the numerical example above, by asserting that all FOPs of the unlifted problem \eqref{eq:unlifted_main} are still FOPs in the lifted domain by transforming this point using tensor outer product, or by overparametrization. This means that some critical geometric structures of \eqref{eq:unlifted_main} are still maintained in \eqref{eq:lifted_a_def}, establishing strong connections between these two representations of the same problem.

After establishing the above negative result about a FOC having remained a stationary point after lifting, we turn to studying the differences between \eqref{eq:unlifted_main} and \eqref{eq:lifted_main}, since Table \ref{table:Hessian_eigs} suggests that the negative curvature of the Hessian at each spurious local minimizer will disappear after enough over-parametrization. Thus, the central problem under study is that whether for an instance of \eqref{eq:unlifted_main} with spurious local minima, these undesirable points will continue to be spurious solutions in the lifted space. If so, there is no apparent benefit to performing the lifting operation. Conversely, if the stationary points obtain a negative curvature, one can select from a large set of low-complexity algorithms to efficiently escape from strict saddles, and therefore eliminate local minima from the problem formulation.

The following theorem demonstrates that lifting enables the elimination of spurious solutions, and we can further derive a bound on the order $l$ needed to achieve the elimination of spurious solutions given the RSS and RSC constants of the problem.

\begin{theorem}\label{thm:socp_norank}
	Consider a SOP $\hat x \neq \pm z \in \RR^n$ of \eqref{eq:unlifted_main}, and assume that \eqref{eq:unlifted_main} satisfies the RSC and RSS conditions. Then $\mb{\hat w} = \hat x^{\otimes l}$ is a strict saddle of \eqref{eq:lifted_main} with a rank-1 symmetric escape direction if $\hat x$ satisfies the inequality
	\begin{equation}\label{eq:thm_socp_distance_lower}
		\|M^* - \hat x \hat x^\top \|^2_F \geq \frac{L_s}{\alpha_s} \|\hat x\|^2_2 \tr(M^*)
	\end{equation}
	and $l$ is odd and is large enough so that
	\begin{equation}\label{eq:thm_socp_lcond}
		l > \frac{1}{1-\log_2(2 \beta)} 
	\end{equation}
	where $\beta$ is defined as
	\[
		\beta \coloneqq \frac{ L_s \tr(M^*) \|\hat x\|^2_2}{\alpha_s \|M^* - \hat x \hat x^\top \|^2_F}.
	\]
	Here, $L_s,\alpha_s$ are the respective RSS and RSC constants of \eqref{eq:unlifted_main}.
\end{theorem}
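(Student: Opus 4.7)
The plan is to prove that $\mb{\hat w} = \hat x^{\otimes l}$ is a strict saddle of \eqref{eq:lifted_main} by exhibiting an explicit rank-$1$ symmetric direction $\Delta = z^{\otimes l}$ that makes the second-order quadratic form in \eqref{eq:socp_lifted} strictly negative. Writing $\mathcal L(\mb M) \coloneqq \langle \mb A, \mb M\rangle_{m^l+1,\ldots,m^l+n^{2l}}$ for the linear operator underlying $f^l$, we have $f^l(\mb M) = \|\mathcal L(\mb M - z^{\otimes 2l})\|_F^2$, so $\nabla f^l(\mb M) = 2\mathcal L^{*}(\mathcal L(\mb M) - \mathcal L(z^{\otimes 2l}))$ and $[\nabla^2 f^l](\mb V, \mb V) = 2\|\mathcal L(\mb V)\|_F^2$. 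Assuming without loss of generality that the sensing matrices $A_i$ are symmetric, iterated application of the tensor identity \eqref{eq:kron_iden} collapses each relevant contraction to a scalar power:
\[
\mathcal L(\hat x^{\otimes 2l}) = a^{\otimes l},\quad \mathcal L(z^{\otimes 2l}) = b^{\otimes l},\quad \mathcal L(\hat x^{\otimes l}\!\otimes z^{\otimes l} + z^{\otimes l}\!\otimes \hat x^{\otimes l}) = 2\,\mathcal A(\hat x z^\top)^{\otimes l},
\]
where $a \coloneqq \mathcal A(\hat x\hat x^\top)$ and $b \coloneqq \mathcal A(zz^\top)$. Plugging these into \eqref{eq:socp_lifted} at $\Delta = z^{\otimes l}$ reduces the entire quadratic form to the scalar expression
\[
Q(z) = 4\bigl(\langle a, b\rangle^l - \|b\|^{2l}\bigr) + 8\|\mathcal A(\hat x z^\top)\|^{2l}.
\]

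Next I would invoke the unlifted first-order condition \eqref{eq:focp_unlifted}: left-multiplying $\nabla f(\hat x\hat x^\top)\hat x = 0$ by $\hat x^\top$ gives $\langle a, a-b\rangle = 0$, hence $\langle a, b\rangle = \|a\|^2$ and the Pythagorean identity $\|b\|^2 = \|a\|^2 + \|a-b\|^2$. Thus $Q(z) < 0$ is equivalent to $\|b\|^{2l} - \|a\|^{2l} > 2\|\mathcal A(\hat x z^\top)\|^{2l}$. Applying the elementary bound $(x+y)^l \geq x^l + y^l$ for $x, y \geq 0$ with $x = \|a\|^2$, $y = \|a-b\|^2$ upgrades the Pythagorean identity to $\|b\|^{2l} - \|a\|^{2l} \geq \|a-b\|^{2l}$, so it suffices to establish $\|a-b\|^{2l} > 2\|\mathcal A(\hat x z^\top)\|^{2l}$.

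The remainder is RSC/RSS bookkeeping. The RSC property applied to the rank-$2$ symmetric matrix $\hat x\hat x^\top - zz^\top$ gives $\|a-b\|^2 \geq \alpha_s\|M^* - \hat x\hat x^\top\|_F^2$. Symmetrizing $\mathcal A(\hat x z^\top) = \tfrac{1}{2}\mathcal A(\hat x z^\top + z\hat x^\top)$ and applying RSS to the rank-$2$ symmetric matrix $\hat x z^\top + z\hat x^\top$, whose squared Frobenius norm is at most $4\|\hat x\|^2\|z\|^2$, yields $\|\mathcal A(\hat x z^\top)\|^2 \leq L_s\|\hat x\|^2\|z\|^2$. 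Combining, the sufficient condition becomes $(\alpha_s\|M^* - \hat x\hat x^\top\|_F^2)^l > 2(L_s\|\hat x\|^2 \tr(M^*))^l$, i.e., $(1/\beta)^l > 2$, which after taking base-$2$ logarithms and using the identity $-\log_2\beta = 1 - \log_2(2\beta)$ is precisely the bound \eqref{eq:thm_socp_lcond}. Condition \eqref{eq:thm_socp_distance_lower} is exactly what guarantees $\beta \leq 1$, so the bound on $l$ is finite. The odd-$l$ hypothesis plays the role of breaking the degeneracy $(-\hat x)^{\otimes l} = \hat x^{\otimes l}$ that occurs when $l$ is even, so that the escape target $z^{\otimes l}$ cannot be identified with $(-z)^{\otimes l}$ nor the saddle with its negative.

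The main technical obstacle I anticipate is Step 1: correctly executing the multi-mode contractions through $\mb A \in \RR^{(m \circ l) \times (n^2 \circ l)}$ with the reshape between its last $l$ modes of dimension $n^2$ and the $2l$ modes of dimension $n$ carried by the tensors $\hat x^{\otimes 2l}$, $z^{\otimes 2l}$, and $\hat x^{\otimes l}\otimes z^{\otimes l}$, so as to collapse each $\mathcal L(\cdot)$ to the clean scalar power claimed. Once this tensor bookkeeping is settled, the rest of the proof is a single binomial inequality together with two standard one-line RSC/RSS estimates.
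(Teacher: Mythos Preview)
Your argument is correct and reaches the same final inequality $(1/\beta)^l>2$, but it takes a genuinely different route from the paper. The paper chooses the escape direction $\Delta=u^{\otimes l}$ with $u$ the \emph{minimum eigenvector} of $\nabla f(\hat x\hat x^\top)$; the first-order condition then gives $u\perp\hat x$, and the key inequality is $a^l-b^l\le (a-b)^l$ for $0\le a\le b$, which only yields a useful (negative) bound when $l$ is odd---this is exactly where the parity hypothesis enters in the paper's proof. You instead take $\Delta=z^{\otimes l}$, the lifted ground truth, and reduce everything via the Pythagorean identity $\langle a,a-b\rangle=0$ (from the unlifted FOC) to the binomial bound $(x+y)^l-x^l\ge y^l$ with $x=\|a\|^2$, $y=\|a-b\|^2$. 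Since all the quantities in your reduction are squared norms, your argument in fact never uses that $l$ is odd; your closing remark about ``breaking the degeneracy $(-\hat x)^{\otimes l}=\hat x^{\otimes l}$'' is not the operative mechanism and could be dropped. So your proof is slightly stronger (it covers even $l$ as well) and more elementary; the paper's eigenvector route, on the other hand, does not require the explicit ground-truth vector $z$ in the escape direction and is closer in spirit to the $r>1$ generalization.

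Two small bookkeeping points. First, with the paper's RSC/RSS conventions one gets $\|a-b\|^2\ge\frac{\alpha_s}{2}\|M^*-\hat x\hat x^\top\|_F^2$ and $\|\mathcal A(\hat xz^\top)\|^2\le\frac{L_s}{2}\|\hat x\|^2\|z\|^2$ (each with an extra factor $1/2$ relative to what you wrote); fortunately the two factors cancel in the ratio so your final $\beta$ is unchanged. Second, the tensor contraction you flag as the ``main technical obstacle'' is exactly as you say: the mode pairing sends $\mathcal L(\hat x^{\otimes l}\otimes z^{\otimes l})$ to $\mathcal A(\hat xz^\top)^{\otimes l}$ coordinate-wise, and the symmetry assumption on the $A_i$ makes this equal to $\mathcal A(z\hat x^\top)^{\otimes l}$, which is what justifies the clean $2c^{\otimes l}$ you use.
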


Theorem \ref{thm:socp_norank} is powerful since it proves that by lifting spurious solutions to higher-order tensor spaces, we can convert them to saddle points, which attests to the power of over-parametrization. More importantly, regardless of how large $L_s/\alpha_s$ is, it is always possible to find an order $l$ large enough to convert $\hat x^{\otimes l}$s to saddle points, which is a major improvement over the existing results such as Theorem \ref{thm:richard_overparam}. Equation \eqref{eq:thm_socp_distance_lower} implies that in order for $\hat x^{\otimes l}$ to become a saddle point in the lifted formulation, it is enough to have either a small $\hat x$ or a large $\|M^* - \hat x \hat x^\top \|^2_F$. As a result, no spurious solution $\hat x$ close to the origin will remain spurious after lifting. This is a significant result because if we initialize a saddle-escaping algorithm near the origin, it will not be trapped inside a spurious solution during the early iterations even if the problem is highly non-convex. One fact to note is that \eqref{eq:thm_socp_distance_lower} is not a necessary condition, but a sufficient one, and therefore it is possible that the statements of Theorem \ref{thm:socp_norank} can be applied to a wider range of $\hat x$.

Another major advantage of Theorem \ref{thm:socp_norank} is that it quantifiably studies how many levels of parametrization are need in order to make an existing spurious local minimizer $\hat x$ a saddle point in the lifted space. Therefore, instead of offering a statement asserting that over-parametrization will work at some large enough $l$ (as done in the SOS setting), it explains how large this $l$ needs to be in terms of its geometric regularities, captured by the RSC and RSS parameters, and also the distance $\|M^* - \hat x \hat x^\top \|^2_F$.

Theorem \ref{thm:socp_norank} also implies that over-parametrization works particularly well for those spurious solutions $\hat x$ far away from the ground truth, in the sense that the further away $\hat x$ is from the ground truth, the smaller $l$ needs to be in order for $\hat x^{\otimes l}$ to become a saddle point, as suggested by \eqref{eq:thm_socp_lcond}. This fact is in line with the existing literature saying that the optimization landscape near $M^*$ is benign, in the sense that there exists no spurious local solution in a region around $M^*$. A well-known incarnation of the aforementioned statement is given below. 
\begin{theorem}[Theorem 3 \cite{bi2020global}]
	If $\hat x$ is a SOP of \eqref{eq:unlifted_main} and
	\begin{equation}\label{eq:thm_local_region_range}
		\|\hat x \hat x^\top - M^*\|_F \leq \frac{4L_s \alpha_s}{(L_s+\alpha_s)^2} \|z\|^2_2,
	\end{equation}
	then
	\[
		\hat x \hat x^\top = M^*
	\]
\end{theorem}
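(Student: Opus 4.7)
The plan is to combine the SOP conditions from Lemma~\ref{lem:cp_unlifted} with the RSC and RSS inequalities to force $E := \hat x\hat x^\top - M^*$ to vanish under the closeness hypothesis. The form of the constant $\frac{4L_s\alpha_s}{(L_s+\alpha_s)^2} = 1 - \left(\frac{L_s-\alpha_s}{L_s+\alpha_s}\right)^2$ strongly hints that sharpness arises from an AM-GM-type balance, typically obtained by optimizing a free scalar parameter in the test direction of the second-order condition.

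First, I would rewrite $\nabla f(\hat x\hat x^\top) = \mathcal{A}^{*}\mathcal{A}(E)$ and use the first-order identity $\nabla f(\hat x\hat x^\top)\hat x = 0$ to deduce that $\langle \nabla f(\hat x\hat x^\top), \hat x v^\top + v\hat x^\top\rangle = 0$ for every $v \in \RR^n$. Applying RSC to the rank-one pair $(\hat x\hat x^\top, M^*)$, together with $f(M^*) = 0$ and $f(\hat x\hat x^\top) \ge 0$, and then eliminating the $\langle \nabla f, \hat x\hat x^\top\rangle$ term via the orthogonality above, would yield the lower bound $-\langle \nabla f(\hat x\hat x^\top), M^*\rangle \ge \frac{\alpha_s}{2}\|E\|_F^2$.

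Second, I would invoke the second-order condition \eqref{eq:socp_unlifted} on a direction of the form $u = tz + s\hat x$, where the free scalar $s$ does not enter $\langle \nabla f(\hat x\hat x^\top), uu^\top\rangle = t^2\langle \nabla f, M^*\rangle$ thanks to the first-order orthogonality. Applying $(L_s, 2)$-RSS to the rank-two argument $\hat x u^\top + u\hat x^\top = t(\hat x z^\top + z\hat x^\top) + 2s\hat x\hat x^\top$, substituting the RSC bound from the previous step, and then minimizing the upper bound over the ratio $s/t$ (with optimum $\tau^{*} = -\langle \hat x, z\rangle/\|\hat x\|^2$) collapses everything to the clean inequality $\alpha_s \|E\|_F^2 \le 2L_s\bigl(\|\hat x\|^2\|z\|^2 - \langle \hat x, z\rangle^2\bigr)$.

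The main obstacle is converting this coupled bound into the claimed sharp distance threshold. Expanding $\|E\|_F^2 = \|\hat x\|^4 + \|z\|^4 - 2\langle \hat x, z\rangle^2$ and substituting into the previous display would produce a quadratic inequality in $(\|\hat x\|^2, \|z\|^2, \langle \hat x, z\rangle^2)$; completing the square in $\|\hat x\|^2$ with $\|z\|^2$ fixed, and invoking the identity $\frac{4L_s\alpha_s}{(L_s+\alpha_s)^2} = 1 - \left(\frac{L_s-\alpha_s}{L_s+\alpha_s}\right)^2$, should recover the sharp threshold. The conclusion would be that whenever $\|E\|_F \le \frac{4L_s\alpha_s}{(L_s+\alpha_s)^2}\|z\|^2$, the only configuration consistent with both the RSC lower bound and the SOP-derived upper bound is $E = 0$, equivalently $\hat x\hat x^\top = M^*$. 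The collinear subcase $\hat x \parallel z$ makes the right-hand side of the optimized upper bound vanish directly, forcing $\|E\|_F = 0$ immediately, so it requires no separate treatment.
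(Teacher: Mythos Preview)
The paper does not supply its own proof of this statement: it is quoted verbatim as Theorem~3 of \cite{bi2020global} and used as a black box to derive Theorem~\ref{thm:socp_corollary}. So there is no in-paper argument to compare against; any assessment has to be of your proposal on its own merits.

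Your first two steps are sound. Using the FOP identity to kill the $\hat x$--components of $\langle\nabla f,uu^\top\rangle$, applying RSC at the pair $(\hat x\hat x^\top,M^*)$, and then RSS on the Hessian term of \eqref{eq:socp_unlifted} with the direction $u=tz+s\hat x$ indeed yields, after minimizing over $s$,
\[
\alpha_s\|E\|_F^2 \;\le\; 2L_s\bigl(\|\hat x\|^2\|z\|^2-\langle\hat x,z\rangle^2\bigr).
\]

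The gap is in the final step. Writing $a=\|\hat x\|^2$, $b=\|z\|^2$, $c=\langle\hat x,z\rangle^2$ and using $\|E\|_F^2=a^2+b^2-2c$ to eliminate $c$, the displayed bound becomes
\[
(L_s-\alpha_s)\,\|E\|_F^2 \;\ge\; L_s\,(a-b)^2,
\]
which is a \emph{lower} constraint on $\|E\|_F$ in terms of $|a-b|$, not an upper one. In particular it is vacuous when $a=b$: take $\|\hat x\|=\|z\|$ with $\hat x\not\parallel z$, so $c<ab$ and $\|E\|_F^2=2(b^2-c)>0$, yet the inequality reads $(L_s-\alpha_s)\|E\|_F^2\ge 0$, which places no obstruction whatsoever. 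Thus the pair ``RSC lower bound $+$ optimized SOC upper bound'' you derived is compatible with arbitrarily small nonzero $\|E\|_F$, and no amount of completing the square in $a$ will extract the threshold $\tfrac{4L_s\alpha_s}{(L_s+\alpha_s)^2}\|z\|^2$ from it. The direction family $u\in\mathrm{span}\{z,\hat x\}$ is simply not rich enough; the argument in \cite{bi2020global} uses a different test direction (essentially $\hat x - zR$ for the aligning sign/rotation $R$) together with a tighter two-sided control of the Hessian quadratic form, and that is where the specific constant $1-\bigl(\tfrac{L_s-\alpha_s}{L_s+\alpha_s}\bigr)^2$ actually emerges.
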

This means that any spurious solution must violate the inequality \eqref{eq:thm_local_region_range}. This allows us to simplify the results of Theorem \ref{thm:socp_norank}, which can be stated in the following form
%
\begin{theorem}\label{thm:socp_corollary}
	Assume that $\hat x \neq \pm z \in \RR^n$ is a spurious solution of \eqref{eq:unlifted_main}, and that \eqref{eq:unlifted_main} satisfies the RSC and RSS assumptions with $\alpha_s$ and $L_s$ constants respectively. The point $\hat x^{\otimes l}$ will become a saddle point of \eqref{eq:lifted_main} for an odd $l$ satisfying \eqref{eq:thm_socp_lcond} if
	\begin{equation}
		\|M^*\|_F \leq \frac{2\sqrt{2} \alpha_s^{5/2}}{(L_s+\alpha_s)^2 \sqrt{L_s}}
	\end{equation}
\end{theorem}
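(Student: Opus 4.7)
The plan is to verify the sufficient condition \eqref{eq:thm_socp_distance_lower} of Theorem~\ref{thm:socp_norank} under the assumed bound on $\|M^*\|_F$; once this condition holds, Theorem~\ref{thm:socp_norank} immediately delivers the conclusion that $\hat x^{\otimes l}$ is a saddle point for odd $l$ satisfying \eqref{eq:thm_socp_lcond}. Thus the entire proof reduces to bounding the two sides of \eqref{eq:thm_socp_distance_lower} in terms of $\|M^*\|_F$, $L_s$, and $\alpha_s$.

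For the left-hand side, I would use the spuriousness of $\hat x$. Since $\hat x \neq \pm z$ is a spurious solution, the hypothesis of the cited Theorem~3 of \cite{bi2020global} must fail, for otherwise $\hat x\hat x^\top = M^*$. Hence
\[
    \|\hat x\hat x^\top - M^*\|_F > \frac{4L_s\alpha_s}{(L_s+\alpha_s)^2}\|z\|_2^2 = \frac{4L_s\alpha_s}{(L_s+\alpha_s)^2}\|M^*\|_F,
\]
which, after squaring, gives the desired lower bound on $\|M^* - \hat x\hat x^\top\|_F^2$.

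For the right-hand side, I would bound $\|\hat x\|_2^2$ using the first-order optimality condition. Starting from $\nabla f(\hat x\hat x^\top)\hat x = 0$ and taking the inner product with $\hat x$ yields the identity $\|\mathcal{A}(\hat x\hat x^\top)\|^2 = \langle \mathcal{A}(\hat x\hat x^\top), \mathcal{A}(M^*)\rangle$. Cauchy--Schwarz then implies $\|\mathcal{A}(\hat x\hat x^\top)\| \leq \|\mathcal{A}(M^*)\|$, and applying the RSC and RSS properties (which translate into two-sided bounds of the form $(\alpha_s/2)\|U\|_F^2 \leq \|\mathcal{A}(U)\|^2 \leq (L_s/2)\|U\|_F^2$ for low-rank $U$) to $\hat x\hat x^\top$ and $M^*$ respectively produces an upper bound of order $\|\hat x\|_2^2 \leq \sqrt{L_s/\alpha_s}\,\|z\|_2^2$. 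Since $\tr(M^*) = \|z\|_2^2 = \|M^*\|_F$, both the left-hand side lower bound and the right-hand side upper bound in \eqref{eq:thm_socp_distance_lower} are now expressed purely in terms of $\|M^*\|_F$ and the constants $L_s, \alpha_s$.

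Finally, I would substitute these two bounds together with the hypothesis $\|M^*\|_F \leq \frac{2\sqrt{2}\alpha_s^{5/2}}{(L_s+\alpha_s)^2\sqrt{L_s}}$ into \eqref{eq:thm_socp_distance_lower} and verify the inequality by direct algebra. The specific form of the constant on $\|M^*\|_F$ is chosen precisely so that the lower bound from Bi--Lavaei dominates the right-hand side that results from plugging in the FOP-based upper bound on $\|\hat x\|_2^2$, with no slack left over. The main obstacle is exactly this last bookkeeping step: carefully tracking the powers of $L_s$, $\alpha_s$, and $\|M^*\|_F$ so that the three ingredients (the strict spuriousness inequality from \cite{bi2020global}, the FOP-derived bound on $\|\hat x\|_2^2$, and the assumed bound on $\|M^*\|_F$) combine cleanly to produce the exact constant $\tfrac{2\sqrt{2}\alpha_s^{5/2}}{(L_s+\alpha_s)^2\sqrt{L_s}}$ stated in the theorem.
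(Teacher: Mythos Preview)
Your overall strategy matches the paper's: invoke the contrapositive of Theorem~3 of \cite{bi2020global} to lower-bound $\|M^*-\hat x\hat x^\top\|_F$, bound $\|\hat x\|_2^2$ above, and feed both into \eqref{eq:thm_socp_distance_lower}. Your FOP + Cauchy--Schwarz argument for $\|\hat x\|_2^2 \leq \sqrt{L_s/\alpha_s}\,\|M^*\|_F$ is in fact a clean elementary substitute for the paper's Lemma~\ref{lem:xhat_upper} (which quotes an external result from \cite{zhang2021general} and gives the slightly looser constant $\sqrt{2L_s/\alpha_s}\,\|M^*\|_F$).

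The genuine gap is in your last step. Once you \emph{square} the Bi--Lavaei lower bound, your lower bound on the left side of \eqref{eq:thm_socp_distance_lower} becomes $\bigl(\tfrac{4L_s\alpha_s}{(L_s+\alpha_s)^2}\bigr)^2\|M^*\|_F^2$, while your upper bound on the right side is $(L_s/\alpha_s)^{3/2}\|M^*\|_F^2$ (using $\tr(M^*)=\|M^*\|_F$ in rank one). The factor $\|M^*\|_F^2$ cancels, leaving a bare inequality in $L_s,\alpha_s$ alone, namely $16L_s^2\alpha_s^2/(L_s+\alpha_s)^4 \geq (L_s/\alpha_s)^{3/2}$, which holds only at $L_s=\alpha_s$. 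The hypothesis on $\|M^*\|_F$ never enters, and the stated constant $\tfrac{2\sqrt{2}\alpha_s^{5/2}}{(L_s+\alpha_s)^2\sqrt{L_s}}$ cannot be recovered from this comparison.

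The paper's proof differs precisely here: it does \emph{not} square the Bi--Lavaei bound. Instead it sets the right-hand side of the (already squared) sufficient condition, $\sqrt{2L_s^3/\alpha_s^3}\,\|M^*\|_F\,\tr(M^*)$, directly below the \emph{unsquared} Bi--Lavaei threshold $\tfrac{4L_s\alpha_s}{(L_s+\alpha_s)^2}\|z\|_2^2$. The mismatch in homogeneity ($\|M^*\|_F^2$ on one side, $\|M^*\|_F$ on the other) is exactly what produces a nontrivial constraint on $\|M^*\|_F$ and yields the stated constant after simplification.
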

This theorem is proved by setting the RHS of \eqref{eq:thm_local_region_range} to be smaller than that of \eqref{eq:thm_socp_distance_lower}. Another technical lemma is introduced to bridge the two terms, so that it does not depend on specific $\hat x$ anymore.

The above results all aim to convert spurious solutions to saddle points via lifting. Although this property is highly desirable for spurious local solutions, it is essential to make sure that it will not hold for the ground truth solution since the correct solution should remain a SOP in the lifted space in order for the lifting technique to be useful. 

In our previous numerical experiment, we empirically showed that the smallest eigenvalue of the Hessian at $z^{\otimes l}$ remains positive, meaning that it is still a SOP in the lifted formulation \eqref{eq:lifted_main}. In the following theorem we formally establish this observation.

\begin{theorem}\label{thm:socp_z}
	Assume that $z \in \RR^n$ is the ground truth solution of \eqref{eq:unlifted_main}. Then $z^{\otimes l}$ remains a SOP of \eqref{eq:lifted_main} regardless of the parametrization level $l$, and without the need for \eqref{eq:unlifted_main} to satisfy RSC or RSS conditions.
\end{theorem}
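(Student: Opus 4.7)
The proof plan is essentially a one-line observation dressed up carefully. The key point is that by construction, the lifted objective $h^l(\mathbf{w}) = \|\langle \mathbf{A}, \mathbf{w}\otimes\mathbf{w} - z^{\otimes l}\otimes z^{\otimes l}\rangle_{m^l+1,\dots,m^l+n^{2l}}\|_F^2$ is a squared norm and therefore non-negative, while substituting $\mathbf{w} = z^{\otimes l}$ makes the inner argument identically zero. Hence $z^{\otimes l}$ is a global minimizer of $h^l$, and the SOP property is automatic from smoothness of $h^l$ as a polynomial in the entries of $\mathbf{w}$.

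Concretely, the plan is: first I would verify that $h^l$ takes values in $\mathbb{R}_{\geq 0}$ by noting it is defined as $\|\cdot\|_F^2$. Second, I would evaluate $h^l(z^{\otimes l})$: plugging in yields $\mathbf{w}\otimes\mathbf{w} = z^{\otimes l}\otimes z^{\otimes l}$, so the argument of the inner product with $\mathbf{A}$ vanishes and $h^l(z^{\otimes l}) = 0$. Therefore $z^{\otimes l}$ attains the global infimum of $h^l$.

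Third, I would invoke the standard fact that any global minimizer of a twice continuously differentiable function satisfies the first- and second-order necessary optimality conditions, i.e. $\nabla h^l(z^{\otimes l}) = 0$ and $\nabla^2 h^l(z^{\otimes l}) \succeq 0$ (in the appropriate tensor sense of Lemma \ref{lem:cp_lifted}). Smoothness here is immediate since $h^l$ is a polynomial in the entries of $\mathbf{w}$. By Lemma \ref{lem:cp_lifted}, this is precisely what it means for $z^{\otimes l}$ to be a SOP of \eqref{eq:lifted_main}. Crucially, no RIP, RSC, or RSS hypothesis on $\mathcal{A}$ enters the argument, and $l$ is arbitrary, which is what the theorem claims.

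There is essentially no obstacle: the hardest part is simply confirming that the bookkeeping of tensor indices and the contraction $\langle\cdot,\cdot\rangle_{m^l+1,\dots,m^l+n^{2l}}$ is consistent with the substitution $\mathbf{w}\otimes\mathbf{w} \mapsto z^{\otimes l}\otimes z^{\otimes l}$, which follows directly from the definition \eqref{eq:lifted_a_def} and the multilinearity of the tensor contraction. So the proof is only two or three lines, relying on the intrinsic global-minimum-implies-SOP principle rather than any curvature estimate on $\mathcal{A}$.
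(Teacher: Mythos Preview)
Your proposal is correct and takes a genuinely different, more elementary route than the paper. The paper proceeds by directly verifying the SOP condition of Lemma~\ref{lem:cp_lifted}: it first observes that the $\nabla f^l$ term in \eqref{eq:socp_lifted} vanishes at $z^{\otimes l}$ (since $z^{\otimes l}\otimes z^{\otimes l} - z^{\otimes l}\otimes z^{\otimes l} = 0$), and then handles the Hessian term by CP-decomposing an arbitrary direction $\Delta = \sum_{p} \delta^{p,1}\otimes\cdots\otimes\delta^{p,l}$ and showing that the remaining quadratic form factors into products of scalars of the form $\sum_a (u^\top A_a z)^2 \geq 0$. You instead bypass the curvature computation entirely: $h^l$ is a squared norm, $h^l(z^{\otimes l}) = 0$, so $z^{\otimes l}$ is a global minimizer of a polynomial and hence automatically a SOP. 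Your argument is shorter, avoids tensor-index bookkeeping, and sidesteps the CP-decomposition machinery; the paper's approach, while heavier, does produce an explicit nonnegative expression for the Hessian quadratic form at $z^{\otimes l}$, which could be useful if one later wanted a quantitative positivity statement rather than mere nonnegativity.
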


\section{Numerical Experiments}

\begin{figure*}[t]
    \centering
    \begin{subfigure}{6cm}
    	    \includegraphics[width=\linewidth]{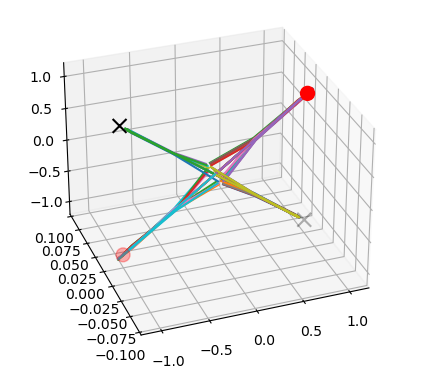}
    \caption{Convergence trajectories of unlifted formulation.}
    \end{subfigure} \hspace{2em}
    \begin{subfigure}{6cm}
    	    \includegraphics[width=\linewidth]{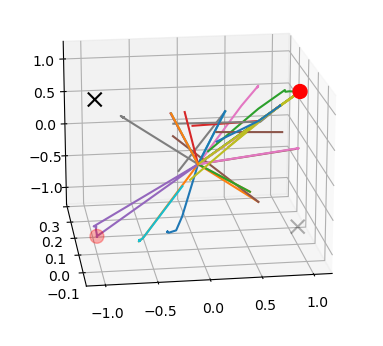}
    \caption{Convergence trajectories of lifted formulation with $l=3$.}
    \end{subfigure}
    \caption{The convergence trajectories of \eqref{eq:operator-ms}, with $n=3, \epsilon = 0.3$. Random gaussian initialization with $\sigma=0.01, \mu=0$. 40 Trials in total.}
    \label{fig:convergence}
\end{figure*}

In this section, we numerically demonstrate that the theoretical results of this paper can be translated to real advantages when using the lifted framework \eqref{eq:lifted_main}\footnote{The code used to produce results in this section can be found at: https://github.com/anonpapersbm/liftedmatrixsensing}.

For the sake of convenience, we revisit the matrix sensing problem \eqref{eq:unlifted_main} with $n=3$ and the special operator \eqref{eq:operator-ms}. We choose $\epsilon = 0.3$ in the numerical experiment, which translates to the RIP constant of $\delta_{2r} = 0.52$, going beyond the known sharp threshold of $\delta < 1/2$, and may create spurious solutions. By the special structure of \eqref{eq:operator-ms}, it is easy to verify that there are theoretically 4 SOPs in total, and they converge to the following 4 points as $\epsilon$ becomes sufficiently small, which are:
\[
	\hat x_1 \approx \begin{bmatrix} 1 \\ 0 \\1 \end{bmatrix}, \hat x_2 \approx \begin{bmatrix} 1 \\ 0 \\ -1 \end{bmatrix}, \hat x_3 \approx \begin{bmatrix} -1 \\ 0 \\ 1 \end{bmatrix}, \hat x_4 \approx \begin{bmatrix} -1 \\ 0 \\-1 \end{bmatrix}.
\]
in which $\hat x_1$ and $\hat x_4$ are ground truth solutions as $\hat x_1 \hat x_1^\top = \hat x_4 \hat x_4^\top = M^*$. The other SOPs $\hat x_2$ and $\hat x_3$ are spurious solutions. 

To empirically verify that $\epsilon = 0.3$ is indeed small enough, we simply start from random Gaussian initialization, and apply optimization algorithms to check to what point(s) the algorithm will eventually converge to. We use the standard ADAM optimizer \cite{kingma2014adam} with the hyper-parameter $\text{lr}=0.02$, and the 3D convergence trajectories are plotted in Figure \ref{fig:convergence}(a) for 40 different trials with independently sampled initial points. In this plot, the ground truth $\hat x_1$ and $\hat x_4$ are labeled with big red dots, and $\hat x_2$ and $\hat x_3$ are labeled with black crosses. One can easily observe that the theoretically derived SOPs are indeed correct, as the plot shows that regardless of initialization, the algorithm will always converge to one of the 4 points given above, which means that $\epsilon=0.3$ is already small enough to deteriorate the landscape. Upon a closer scrutiny, one can further realize that all 4 SOPs are equally attractive, and it is impossible to differentiate between ground truth solutions and spurious solutions. In particular, the success rate of applying ADAM to \eqref{eq:unlifted_main} with \eqref{eq:operator-ms} is 57.5\%. This is highly undesirable in practice because the user will constantly obtain different results by running the same algorithm, leading to confusion as to which result is correct, which exactly represents the inherent difficulty of a highly non-convex optimization problem like \eqref{eq:operator-ms}.

Thus, at a high level, it is necessary to show that by using the lifted framework \eqref{eq:lifted_main}, we can avoid converging to $\hat x_2^{\otimes l}$ and $\hat x_3^{\otimes l}$ since with this over-parametrized framework, it is possible that they have become saddle points instead of spurious solutions, as suggested by Theorem \ref{thm:socp_norank}. To this end, we plot the optimization trajectory of \eqref{eq:lifted_main} with $l=3$ and \eqref{eq:operator-ms} in Figure \ref{fig:convergence}(b), where the optimizer of choice is still ADAM, since it has the ability to escape saddle points and it makes the comparison with Figure \ref{fig:convergence}(a) meaningful. The reason that we chose $l=3$ instead of $l=2$ is because Theorem \ref{thm:socp_norank} only applies to odd values of $l$. However, one caveat is that since the optimization is performed in tensor space, it is impossible to visualize. To address this issue, instead of showing the full tensor, we perform tensor PCA along each step of the trajectory, and plot the 3D vector that can be transformed to the dominant rank-1 symmetric tensor via tensor outer product. In particular, given a tensor $\mb{w}$ on the trajectory, we plot $w \in \RR^3$ such that:
\[
	w = \argmin_w \| \mb{w} - w^{\otimes l} \|_F
\]
meaning that $w$ is the best projection of $\mb{w}$ onto $\RR^3$. This is why Figure \ref{fig:convergence}(b) seems more complicated than Figure \ref{fig:convergence}(a), as an extra layer of approximation is applied. Nevertheless, the message of Figure \ref{fig:convergence}(b) is unchanged, as now instead of converging to all 4 points equally, the lifted formulation only converges to the ground truth solutions, as no trajectory leads to the black crosses. This indicates that by converting $\hat x_2^{\otimes l}$ and $\hat x_3^{\otimes l}$ to saddle points via over-parametrization, we gain real benefits by avoiding spurious solutions, especially compared side-by-side with Figure \ref{fig:convergence}(a). To further demonstrate the power of the over-parametrized framework \eqref{eq:lifted_main}, we summarize the success rate of unlifted framework \eqref{eq:unlifted_main} and the lifted framework \eqref{eq:lifted_main} in the table below.
\begin{table}[ht]
\centering
\caption{Success rate of the lifted and unlifted frameworks applied to \eqref{eq:operator-ms}}
\begin{tabular}{lll}
\toprule
 & Unlifted $l=1$ & Lifted $l=3$ \\
\midrule
$n=3$ & 0.575 & 0.62 \\
$n=4$ & 0.575 & 0.68 \\
$n=5$ & 0.475 & 0.75 \\
\bottomrule
\end{tabular}
\label{table:success_rate}
\end{table}
Here, we count a trial to be a "success" if the final iteration $x_{\text{terminal}}$ satisfies 
\[
	\|x_{\text{terminal}}^{\otimes l} - z^{\otimes l} \|_F \leq 0.05
\]
From Table \ref{table:success_rate}, we can see that $n$ increases, the success rate of the lifted framework goes up, especially in contrast to the fact that higher $n$ means lower success rate for the unlifted formulation due to it having $\mathcal{O}(2^{\ceil{n/2}}-2)$ spurious local solution. This empirically demonstrates that the lifted formulation is especially valuable in problems with higher dimensions.

\section{Conclusion}
This paper proposed a powerful method to deal with the non-convexity of the matrix sensing problem via the popular BM formulation. Since the problem has several spurious solutions in general and local search methods are prone to be trapped in those points, we developed a new framework via a SOS-type lifting technique to address the issue. We show that although the spurious solutions remain stationary points through the lifting, if a sufficiently rich over-parametrization is used, those spurious solutions will be transformed into strict saddle points (under technical assumptions) and are escapable. This establishes the first result in the literature proving the conversion of spurious solutions to saddle points, and it quantifies how much over-parametrization is needed to break down the complexity of the problem. Future research directions include the sparsification of the lifting method to eliminate unnecessary monomials and reduce the complexity, as well as studying whether lifting will create new stationary points and where they are located relative to the ground truth solution.

\bibliography{references}
\bibliographystyle{icml2022}

\newpage
\appendix
\onecolumn
\section{Definition}
\begin{definition}[Tensor]
	As a generalization of the way vectors are used to parametrize finite-dimensional vector spaces, we use \emph{arrays} to parametrize tensors generated from product of finite-dimensional vector spaces, as per \cite{comon2008symmetric}. In particular, we define an $l$-way array as such:
	\[
		\mathbf{a} = \{a_{i_1i_2\dots i_l} | 1 \leq i_k \leq n_k, 1 \leq k \leq l \} \in \RR^{n_1 \times \dots \times n_l}
	\]
	Note that in this paper tensors and arrays can be regarded as synonymous since there exists an isomorphism between them. Moreover, if $n_1 = \dots = n_l$, then we call this tensor(array) an $l$-order(way), $n$-dimensional tensor. For the convenience of tensor representation, we use the notation $\RR^{n \circ l}$ with $n \circ l \coloneqq n \times \dots \times n$. In this work, tensors are denoted with bold variables, and other fonts are reserved for matrices, vectors, and scalars unless specified otherwise.
\end{definition}

\begin{definition}[Symmetric Tensor]
	Similar to the definition of symmetric matrices, for an order-$l$ tensor $\mb{a}$ with the same dimensions (i.e., $n_1 = \dots = n_l$), also called a cubic tensor, it is said that the tensor is symmetric if  its entries are invariance under any permutation of their indices:
	\[
		a_{i_{\sigma(1)} \cdots i_{\sigma(l)}} = a_{i_1 \cdots i_l} \quad \forall \sigma, \quad i_1, \dots, i_l \in \{1,\dots,n\}
	\]
	where $\sigma \in \mathcal{G}_l$ denotes a specific permutation and $\mathcal{G}_l$ is the symmetric group of permutations on $\{1, \dots, l\}$. We denote the set of symmetric tensors as $\mathrm{S}^l(\RR^n)$.
\end{definition}

\begin{definition}[Rank of Tensors]
	The rank of a cubic tensor $\mb{a} \in \RR^{n \circ l}$ is defined as:
	\[
		\rk(\mb{a}) = \min\{r | \mb{a} = \sum_{i=1}^r u_i \otimes v_i \otimes \cdots \otimes w_i \}
	\]
	where $u_i, \dots, w_i \in \RR^n \ \forall i$. Furthermore, according to \cite{kolda2015numerical}, if $\mb{a}$ is a symmetric tensor, then it can be decomposed as:
	\[
		\mb{a} = \sum_{i=1}^r \lambda_i u_i \otimes \dots \otimes u_i \coloneqq \sum_{i=1}^r \lambda_i u_i^{\otimes l}
	\]
	and the rank is conveniently defined as the number of nonnegative $\lambda_i$s, which is very similar to the rank of symmetric matrices indeed. For notational convenience, we denote rank-$r$ symmetric tensors as $\mathrm{S}^l(\RR^n)_r$.
	
\end{definition}

\section{Proofs}
\begin{proof}[Proof of Theorem \ref{thm:focp_norank}]
\eqref{eq:focp_unlifted} implies that
	\begin{equation}\label{eq:focp_unlifted_idx}
		\sum_{a,i,j,s}^{[m] \times [n] \times [n] \times [n]} (A_a)_{sk} (A_a)_{ij} \hat x_i \hat x_j \hat x_s = \sum_{a,i,j,s}^{[m] \times [n] \times [n] \times [n]} (A_a)_{sk} (A_a)_{ij} z_i z_j \hat x_s \qquad \forall k
	\end{equation}
	Then we focus on \eqref{eq:focp_lifted} with
	\[
		\nabla f^l( \mb{\hat w} \otimes  \mb{\hat w}) = \langle \mathbf{A}^\top \mathbf{A}, \mb{\hat w} \otimes \mb{\hat w} - z^{\otimes l} \otimes z^{\otimes l}\rangle_{n^{2l}+1,\dots,2n^{2l}}
	\]
	where $\mathbf{A}^\top \mathbf{A} \coloneqq \langle \mb{A}, \mb{A} \rangle_{m^l+1,\dots,m^l+n^{2l}}$.
	Thus, the LHS of \eqref{eq:focp_lifted} is:
	\begin{equation} \label{eq:lhs_focp_lift}
		\begin{aligned}
			&\sum_{\{a_\alpha, i_\alpha, j_\alpha, s_\alpha\}_{\alpha=1}^l}^{([m] \circ l) \times ([n] \circ l) \times ([n] \circ l) \times ([n] \circ l)} \left( \prod_{\alpha=0}^l A_{a_\alpha s_\alpha k_\alpha} A_{a_\alpha i_\alpha j_\alpha} \hat x_{i_\alpha} \hat x_{j_\alpha} \hat x_{s_\alpha} \right) - \left( \prod_{\alpha=0}^l A_{a_\alpha s_\alpha k_\alpha} A_{a_\alpha i_\alpha j_\alpha} z_{i_\alpha} z_{j_\alpha} \hat x_{s_\alpha} \right) \\
			= & \prod_{\alpha=0}^l \left( \sum_{a_\alpha, i_\alpha, j_\alpha, s_\alpha}^{[m] \times [n] \times [n] \times [n]} A_{a_\alpha s_\alpha k_\alpha} A_{a_\alpha i_\alpha j_\alpha} \hat x_{i_\alpha} \hat x_{j_\alpha} \hat x_{s_\alpha} \right) - 
			\prod_{\alpha=0}^l \left( \sum_{a_\alpha, i_\alpha, j_\alpha, s_\alpha}^{[m] \times [n] \times [n] \times [n]} A_{a_\alpha s_\alpha k_\alpha} A_{a_\alpha i_\alpha j_\alpha} z_{i_\alpha} z_{j_\alpha} \hat x_{s_\alpha} \right)
		\end{aligned}
	\end{equation}
	Given \eqref{eq:focp_unlifted_idx} and \eqref{eq:lifted_a_def}, we know that:
	\[
		\sum_{a_\alpha, i_\alpha, j_\alpha, s_\alpha}^{[m] \times [n] \times [n] \times [n]} A_{a_\alpha s_\alpha k_\alpha} A_{a_\alpha i_\alpha j_\alpha} \hat x_{i_\alpha} \hat x_{j_\alpha} \hat x_{s_\alpha} = 
		\sum_{a_\alpha, i_\alpha, j_\alpha, s_\alpha}^{[m] \times [n] \times [n] \times [n]} A_{a_\alpha s_\alpha k_\alpha} A_{a_\alpha i_\alpha j_\alpha} z_{i_\alpha} z_{j_\alpha} \hat x_{s_\alpha} \quad \forall k_\alpha 
	\]
	Therefore, substituting the above equality into \eqref{eq:lhs_focp_lift} yields that LHS of \eqref{eq:focp_lifted} is 0.
	
\end{proof}

Before proceeding to the proof of Theorem \ref{thm:socp_norank}, we first recall a useful technical Lemma from \cite{ma2022noisy}:
\begin{lemma}\label{lem:G_upper}
	For any SOP $\hat x$ of \eqref{eq:unlifted_main}, define $G$ as $G \coloneqq - \lambda_{\text{min}}(\nabla f(\hat x \hat x^\top))$, and $L_s$ be the RSS constant. Then it holds that:
	\[
		G \leq \|\hat x\|^2_2 L_s
	\]
\end{lemma}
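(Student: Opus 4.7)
The plan is to extract the bound on $G$ directly from the second-order optimality condition \eqref{eq:socp_unlifted} in Lemma~\ref{lem:cp_unlifted} by plugging in a carefully chosen test direction $u$. Set $H := \nabla f(\hat x \hat x^\top)$, which is symmetric because the sensing matrices $A_i$ are symmetric. The first-order condition \eqref{eq:focp_unlifted} gives $H \hat x = 0$, so $\hat x$ lies in the kernel of $H$; in particular $\lambda_{\min}(H) \le 0$, which confirms that $G \ge 0$.

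First I would dispatch the trivial case $G = 0$, where the inequality $G \le \|\hat x\|_2^2 L_s$ is immediate. So assume $G > 0$, which forces $\lambda_{\min}(H) < 0$. Let $v$ be a unit eigenvector of $H$ associated with the eigenvalue $\lambda_{\min}(H) = -G$. Because $H$ is symmetric, its eigenspaces for distinct eigenvalues are orthogonal; since $\hat x$ is a $0$-eigenvector and $v$ is a $(-G)$-eigenvector with $-G \ne 0$, we may assume $v \perp \hat x$.

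Next I substitute $u = v$ into \eqref{eq:socp_unlifted}. The first term becomes $2 v^\top H v = -2G$. The second term is the Hessian quadratic form of $f$ evaluated at $U := \hat x v^\top + v \hat x^\top$. Since $f(M) = \|\mathcal{A}(M - zz^\top)\|_2^2$ is quadratic with constant Hessian $2\mathcal{A}^*\mathcal{A}$, the $(L_s, r)$-RSS assumption applied to $U$ (which has rank at most $2$) bounds this quadratic form by $L_s \|U\|_F^2$. Combining the two terms gives $-2G + L_s \|U\|_F^2 \ge 0$, i.e., $2G \le L_s \|U\|_F^2$.

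It remains to compute $\|U\|_F^2$. A direct expansion yields
\[
\|U\|_F^2 = 2 \|\hat x\|_2^2 \|v\|_2^2 + 2 \langle \hat x, v \rangle^2,
\]
and the orthogonality $v \perp \hat x$ together with the normalization $\|v\|_2 = 1$ collapses this to $2 \|\hat x\|_2^2$. Substituting gives $2G \le 2 L_s \|\hat x\|_2^2$, from which the claim follows after dividing by $2$. The only genuine subtlety is the availability of an eigenvector $v$ orthogonal to $\hat x$; this is precisely where the first-order condition $H \hat x = 0$ does the real work, preventing the loss of a factor of $2$ that would otherwise arise from expanding $\|\hat x v^\top + v \hat x^\top\|_F^2 \le 4 \|\hat x\|_2^2$.
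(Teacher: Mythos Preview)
Your proof is correct and follows exactly the approach implicit in the paper: the paper does not prove this lemma directly (it is recalled from \cite{ma2022noisy}), but the identical manipulation---plugging the unit $\lambda_{\min}$-eigenvector $u$ into \eqref{eq:socp_unlifted}, using $u\perp\hat x$ from \eqref{eq:focp_unlifted}, and bounding $[\nabla^2 f](\hat xu^\top+u\hat x^\top,\hat xu^\top+u\hat x^\top)\le 2L_s\|\hat x\|_2^2$ via RSS---appears verbatim in the proof of Theorem~\ref{thm:socp_norank} when controlling $C_2$. One tiny omission: you should note that $\hat x=0$ is handled separately (there the SOP condition reduces to $\nabla f(0)\succeq 0$, so $G\le 0$ and the bound is trivial), since your orthogonality argument for $v\perp\hat x$ tacitly assumes $\hat x\ne 0$.
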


\begin{proof}[Proof of Theorem \ref{thm:socp_norank}]
	\eqref{eq:socp_unlifted} implies that:
	\begin{equation}
	\begin{aligned}
		\text{LHS} =2 \sum_{a,i,j,s, k}^{[m] \times [n] \circ 4} (A_a)_{sk} (A_a)_{ij} (\hat x_i \hat x_j -z_i z_j) u_k u_s + \\
		\sum_{a,i,j,s,k}^{[m] \times [n] \circ 4} (A_a)_{sk} (A_a)_{ij} (\hat x_i u_j + u_i \hat x_j) (\hat x_s u_k + u_s \hat x_k)
	\end{aligned}
	\end{equation}
	If the $A_a$ matrices are symmetric, which can be achieved by redefining $A_a$ as $(A_a^T+A_a)/2$ without changing the measurement values, the above equation can be simplified as:
	\begin{equation}
	\begin{aligned}
		 2 \underbrace{\sum_{a,i,j,s}^{[m] \times [n] \circ 4} (A_a)_{sk} (A_a)_{ij} (\hat x_i \hat x_j -z_i z_j) u_k u_s}_{C_1} + 
		4 \underbrace{\sum_{a,i,j,s}^{[m] \times [n] \circ 4} (A_a)_{sk} (A_a)_{ij} \hat x_i \hat x_k u_j u_s}_{C_2}
	\end{aligned}
	\end{equation}
	According to \cite{zhang2021general}, $\nabla f(M)$ can be assumed to be symmetric without loss of generality. Hence, one can select $u \in \RR^n$ such that $u^\top \nabla f(\hat x \hat x^\top)u = \lambda_{\text{min}}(\nabla f(\hat x \hat x^\top))$ and $\lambda_{\text{min}}(\nabla f(\hat x \hat x^\top)) \leq 0$ under the RSC assumption with $\alpha_s \geq 0$. The reason that this holds is because first we know that
	\[
		f(M^*) \geq f(\hat x \hat x^\top) + \langle \nabla f(\hat x \hat x^\top ), M^* - \hat x \hat x^\top \rangle + \alpha_s \|\hat x \hat x^\top -M^*\|^2_F.
	\]
	Since $\langle \nabla f(\hat x \hat x^\top ), \hat x \hat x^\top \rangle = 0$ according to \eqref{eq:focp_unlifted} and $f(\hat x \hat x^\top) - f(M^*) \geq 0$, we know that
	\[
		\langle \nabla f(\hat x \hat x^\top ), M^* \rangle \leq -\alpha_s \|\hat x \hat x^\top -M^*\|^2_F
	\]
	after rearrangements. Furthermore, since both $\nabla f(\hat x \hat x^\top )$ and $M^*$ are assumed to be positive semidefinite for the above-mentioned reasons, we have that
	\[
		\langle \nabla f(\hat x \hat x^\top ), M^* \rangle \geq \lambda_{\min} (\nabla f(\hat x \hat x^\top )) \tr(M^*)
	\]
	which implies that 
	\begin{equation}\label{eq:G_bound}
		\lambda_{\min} (\nabla f(\hat x \hat x^\top )) \leq -\alpha_s \frac{\|\hat x \hat x^\top -M^*\|^2_F}{\tr(M^*)} \leq 0
	\end{equation}
	With this piece of knowledge in mind, we define $G \coloneqq - \lambda_{\text{min}}(\nabla f(\hat x \hat x^\top)) \geq 0$. Thus,
	\[
		C_1 = -G.
	\]
	Moreoever, the RSS condition implies that:
	\begin{equation*}
		\begin{aligned}
			4 C_2 &= [\nabla^2 f(\hat x \hat x^\top)](\hat x u^\top + u \hat x^\top , \hat x u^\top + u \hat x^\top) \leq L_s \|\hat x u^\top + u \hat x^\top\|^2_F  \\
			&= L_s \tr((\hat x u^\top + u \hat x^\top)^\top \hat x u^\top + u \hat x^\top) = 2 L_s \|\hat x\|^2_2
		\end{aligned}
	\end{equation*}
	since $u^\top \hat x = 0$ according to the first-order condition \eqref{eq:focp_unlifted}. Therefore,
	\[
		C_2 \leq \frac{1}{2} L_s \|\hat x\|^2_2
	\]
	
	Now, we take a look at the left-hand side (LHS) of \eqref{eq:socp_lifted}; here we choose $\Delta = u^{\otimes l}$ for the same $u \in \RR^n$ chosen above:
	\begin{equation}
		\begin{aligned}
		 &\left[ 2 \underbrace{\sum_{\{a_\alpha, i_\alpha, j_\alpha, s_\alpha, k_\alpha \}_{\alpha=1}^l}^{([m] \circ l) \times \left[ ([n] \circ l) \circ 4 \right]} \left( \prod_{\alpha=0}^l A_{a_\alpha s_\alpha k_\alpha} A_{a_\alpha i_\alpha j_\alpha} \hat x_{i_\alpha} \hat x_{j_\alpha} u_{s_\alpha} u_{k_\alpha}\right) - \left( \prod_{\alpha=0}^l A_{a_\alpha s_\alpha k_\alpha} A_{a_\alpha i_\alpha j_\alpha} z_{i_\alpha} z_{j_\alpha} u_{s_\alpha} u_{k_\alpha} \right)}_{C_3} \right] + \\
		&4 \underbrace{\sum_{\{a_\alpha, i_\alpha, j_\alpha, s_\alpha, k_\alpha \}_{\alpha=1}^l}^{([m] \circ l) \times \left[ ([n] \circ l) \circ 4 \right]} \prod_{\alpha=0}^l A_{a_\alpha s_\alpha k_\alpha} A_{a_\alpha i_\alpha j_\alpha} \hat x_{i_\alpha} \hat x_{k_\alpha} u_{j_\alpha} u_{s_\alpha}}_{C_4}
		\end{aligned}
	\end{equation}
	Now,
	\begin{equation}
		\begin{aligned}
			C_3 &= 2 \prod_{\alpha=1}^l \left( \sum_{a,i,j,s, k}^{[m] \times [n] \circ 4} A_{a_\alpha s_\alpha k_\alpha} A_{a_\alpha i_\alpha j_\alpha} \hat x_{i_\alpha} \hat x_{j_\alpha} u_{s_\alpha} u_{k_\alpha} \right) - 2 \prod_{\alpha=1}^l \left( \sum_{a,i,j,s, k}^{[m] \times [n] \circ 4} (A_{a_\alpha s_\alpha k_\alpha} A_{a_\alpha i_\alpha j_\alpha} z_{i_\alpha} z_{j_\alpha} u_{s_\alpha} u_{k_\alpha} \right) \\
			& = 2  \left( \sum_{a,i,j,s, k}^{[m] \times [n] \circ 4} (A_a)_{sk} (A_a)_{ij} \hat x_i \hat x_j  u_k u_s \right)^l - 2 \left( \sum_{a,i,j,s, k}^{[m] \times [n] \circ 4} (A_a)_{sk} (A_a)_{ij} z_i z_j  u_k u_s \right)^l \\
			&\leq 2 \left( \sum_{a,i,j,s, k}^{[m] \times [n] \circ 4} (A_a)_{sk} (A_a)_{ij} (\hat x_i \hat x_j -z_i z_j)  u_k u_s\right)^l = C_1^l = -  G^l
		\end{aligned}
	\end{equation}
	where the inequality follows from:
	\[
		a^n - b^n \leq (a-b)^n \quad \forall b \geq a \geq 0
	\]
	Here, since $a-b = C_1 \leq 0$, the above inequality can be used.
	Next,
	\begin{equation}
		\begin{aligned}
			C_4 = (\sum_{a,i,j,s}^{[m] \times [n] \circ 4} (A_a)_{sk} (A_a)_{ij} \hat x_i \hat x_k u_j u_s)^l = C_2^l  \leq \frac{1}{2^l} L^l_s \|\hat x\|^{2l}_2
		\end{aligned}
	\end{equation}
	As a result,
	\[
		\text{LHS of \eqref{eq:socp_lifted}} \leq \underbrace{-2G^l}_{\text{Part 1}} + \underbrace{\frac{2}{2^{l-1}} L^l_s \|\hat x\|^{2l}_2}_{\text{Part 2}}
	\]
	We know that $G \geq 0$ so Part 1 is always negative assuming $l$ is odd, and Part 2 is always positive. Therefore, it suffices to find the order $l$ such that 
	\begin{equation}\label{eq:l_condition}
		G^l > (1/2^{l-1}) L^l_s \|\hat x\|^{2l}_2
	\end{equation}
	to be able to make the LHS of \eqref{eq:socp_lifted} negative.
	
%
	
	To derive a sufficient condition for \eqref{eq:l_condition}, we first need a lowerbound on $G$, and to do so, we start with the (sparse) RSC assumption:
	
	\[
		f(M^*) \geq f(\hat x \hat x^\top) + \langle \nabla f(\hat x \hat x^\top), M^* - \hat x \hat x^\top \rangle + \frac{\alpha_s}{2} \|M^* - \hat x \hat x^\top \|^2_F 
	\]
	Then, since $f(M^*) \leq f(\hat x \hat x^\top)$ (as $M^*$ is the global optimum), we have:
	\begin{equation}\label{eq:thm_socp_r1_help1}
		\begin{aligned}
			 0 \geq \langle \nabla f(\hat x \hat x^\top), M^* - \hat x \hat x^\top \rangle + \frac{\alpha_s}{2} \|M^* - \hat x \hat x^\top \|^2_F = \langle \nabla f(\hat x \hat x^\top), M^* \rangle + \frac{\alpha_s}{2} \|M^* - \hat x \hat x^\top \|^2_F
		\end{aligned}
	\end{equation}
	where the equality follows from the FOP condition \eqref{eq:focp_unlifted} for $\hat x$. Also, since $M^*$ is assumed to be positive semidefinite, we have
	\begin{equation}\label{eq:thm_socp_r1_help2}
		\langle \nabla f(\hat x \hat x^\top), M^* \rangle \geq \lambda_{\text{min}}(\nabla f(\hat x \hat x^\top)) \tr(M^*).
	\end{equation}
	Combining \eqref{eq:thm_socp_r1_help1} and \eqref{eq:thm_socp_r1_help2}, we have:
	\[
		-G = \lambda_{\text{min}}(\nabla f(\hat x \hat x^\top)) \leq -\frac{\alpha_s}{2 \tr(M^*)} \|M^* - \hat x \hat x^\top \|^2_F,
	\]
	meaning that
	\begin{equation}\label{eq:G_lower}
		G \geq \frac{\alpha_s}{2 \tr(M^*)} \|M^* - \hat x \hat x^\top \|^2_F
	\end{equation}
	Therefore, if
	\[
		\left( \frac{\alpha_s}{2 \tr(M^*)} \|M^* - \hat x \hat x^\top \|^2_F \right)^l > (1/2^{l-1}) L^l_s \|\hat x\|^{2l}_2,
	\]
	we can conclude that \eqref{eq:l_condition} holds, which implies that the LHS of \eqref{eq:socp_lifted} is negative, directly proving that $\hat x^{\otimes l}$ is not a SOP anymore. Elementary manipulations of the above equation give that a sufficient condition is 
	\begin{equation}\label{eq:l_condition2}
		\|M^* - \hat x \hat x^\top \|^2_F > 2^{1/l} \frac{L_s}{\alpha_s} \|\hat x\|^2_2 \tr(M^*)
	\end{equation}
	We now consider \eqref{eq:thm_socp_distance_lower}, which means that
	\begin{equation}\label{eq:xhat_upper_D}
		\|\hat x\|^2  \leq \frac{\alpha_s}{L_s \tr(M^*)} \|M^* - \hat x \hat x^\top \|^2_F
	\end{equation}
	Subsequently, define a constant $\gamma$ such that:
	\[
		L_s \|\hat x\|^2_2 = \gamma (\frac{\alpha_s}{2 \tr(M^*)} \|M^* - \hat x \hat x^\top \|^2_F) 
	\]
	Then according to Lemma \ref{lem:G_upper} and \eqref{eq:G_lower}, we can conclude that $\gamma \geq 1$. Moreover, \eqref{eq:xhat_upper_D} also means that $\gamma < 2$. So with this new definition, the sufficient condition \eqref{eq:l_condition2} becomes
	\begin{equation}\label{eq:l_condition3}
		1 > \frac{\gamma}{2^{(l-1)/l}}
	\end{equation}
	Since we already know that $1 \leq \gamma < 2$, there always exists a large enough $l$ such that \eqref{eq:l_condition3} holds, which in turn implies that LHS of \eqref{eq:socp_lifted} is negative, proving that $\hat x^{\otimes l}$ is a saddle point with the escape direction $u^{\otimes l}$, proving the claim.
	
	Next, we aim to study how large $l$ needs to be in order for \eqref{eq:l_condition3} to hold. Now, by utilizing Lemma \ref{lem:xhat_upper} again, we know that
	\[
		\gamma = \frac{2 L_s \tr(M^*) \|\hat x\|^2_2}{\alpha_s \|M^* - \hat x \hat x^\top \|^2_F} \coloneqq 2 \beta
	\]
	and we know $\beta \leq 1$ due to assumption \eqref{eq:thm_socp_distance_lower}. So for \eqref{eq:l_condition3} to hold true, we need
	\[
		2^{(l-1)/l} > 2 \beta \implies \frac{l-1}{l} > \log_2(2 \beta) \implies l > \frac{1}{1-\log_2(2 \beta)} 
	\]
\end{proof}

\begin{proof}[Proof of Theorem \ref{thm:socp_corollary}]
	First, consider the following technical lemma, which is proved below this proof,
	\begin{lemma}\label{lem:xhat_upper}
		Given a FOP $\hat x$ of \eqref{eq:unlifted_main}, it holds that
		\begin{equation}\label{eq:xhat_upper}
			\|\hat x\|^2 < \sqrt{\frac{2L_s}{\alpha_s}} \|M^*\|_F
		\end{equation}
\end{lemma}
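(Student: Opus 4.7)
The plan is to sandwich the nonnegative quantity $(\alpha_s/2)\|\hat x\|^4$ between a lower bound derived from the RSC inequality (together with the FOP condition at $\hat x\hat x^\top$) and an upper bound on $f(0)$ derived from RSS. The centerpiece is the observation that the FOP condition $\nabla f(\hat x\hat x^\top)\hat x = 0$ yields the orthogonality relation $\langle \nabla f(\hat x\hat x^\top), \hat x\hat x^\top\rangle = \hat x^\top \nabla f(\hat x\hat x^\top)\hat x = 0$, which lets us kill the first-order term when we expand RSC around $\hat x\hat x^\top$.

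First, I would apply the RSC inequality to the pair $(M, N) = (\hat x\hat x^\top, 0)$, both of rank at most $1$. After substituting the FOP-induced orthogonality and dropping the nonnegative term $f(\hat x\hat x^\top)$ on the right-hand side, this produces the inequality $(\alpha_s/2)\|\hat x\|^4 \leq f(0)$. Next, to upper bound $f(0)$, I would apply RSS between $0$ and $M^*$. The key computation here is that the explicit form $f(M) = \|\mathcal{A}(M-M^*)\|^2$ gives $\nabla f(0) = -2\mathcal{A}^*\mathcal{A}(M^*)$, from which $\langle \nabla f(0), M^*\rangle = -2\|\mathcal{A}(M^*)\|^2 = -2 f(0)$. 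Plugging this identity together with $f(M^*) = 0$ into the RSS inequality collapses it into the clean bound $f(0) \leq (L_s/2)\|M^*\|_F^2$.

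Chaining the two bounds produces $\|\hat x\|^4 \leq (L_s/\alpha_s)\|M^*\|_F^2$, so $\|\hat x\|^2 \leq \sqrt{L_s/\alpha_s}\,\|M^*\|_F$, from which the claimed strict inequality $\|\hat x\|^2 < \sqrt{2L_s/\alpha_s}\,\|M^*\|_F$ follows immediately since $1 < \sqrt{2}$. I do not anticipate any real obstacle: the only point requiring care is verifying that both applications of RSC and RSS involve matrices of rank at most $1$, which is already within the regime of the paper's working assumptions. As a byproduct, the argument shows that the $\sqrt{2}$ factor in the statement is slack, included just to ensure strict inequality rather than being tight.
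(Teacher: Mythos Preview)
Your argument is correct and takes a more elementary, self-contained route than the paper. The paper invokes Lemma~6 of \cite{zhang2021general} as a black box---that lemma states that $\|u\|^4 \geq \max\{(2L_s/\alpha_s)\|M^*\|_F^2,\ (2\lambda\sqrt r/\alpha_s)^{4/3}\}$ forces $\|\nabla h(u)\|_F \geq \lambda$---and obtains the bound by contraposition together with a $\lambda\to 0$ limit. Your direct RSC/RSS sandwich (expand RSC around $\hat x\hat x^\top$, evaluate at $0$, then bound $f(0)$ via RSS between $0$ and $M^*$) avoids the external citation and yields the tighter $\|\hat x\|^2 \leq \sqrt{L_s/\alpha_s}\,\|M^*\|_F$, confirming your remark that the $\sqrt 2$ is slack. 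The paper's route is modular; yours is more transparent about the origin of each constant and slightly sharper.

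One cosmetic point: with the RSC inequality written exactly as in the paper's Definition (which carries $\nabla f(M)$ rather than $\nabla f(N)$), the labeling $(M,N)=(\hat x\hat x^\top,0)$ literally produces the wrong-signed inequality. What you actually intend---and what the paper itself uses in its proofs---is the standard form $f(M)\geq f(N)+\langle\nabla f(N),M-N\rangle+(\alpha_s/2)\|M-N\|_F^2$ with $N=\hat x\hat x^\top$, $M=0$; under that reading your computation goes through verbatim.
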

	Via the above lemma, we know that a sufficient condition to \eqref{eq:thm_socp_distance_lower} is 
	\[
		\|M^* - \hat x \hat x^\top \|^2_F \geq \sqrt{\frac{2L^3_s}{\alpha^3_s}} \|M^*\|_F \tr(M^*)
	\]
	Making the RHS of the above inequality to be smaller than RHS of \eqref{eq:thm_local_region_range} proves the theorem, especially by acknowledging that $M^*$ is rank-1. 
\end{proof}

\begin{proof}[Proof of Lemma \ref{lem:xhat_upper}]
	Lemma 6 of \cite{zhang2021general} states that given an arbitrary constant $\lambda$ and vector $u \in \RR^n$,
	\[
		\|u\|^4_2 \geq \max\{\frac{2L_s}{\alpha_s} \|M^*\|^2_F, (\frac{2\lambda \sqrt{r}}{\alpha_s})^{4/3} \} \implies \|\nabla h(u)\|_F \geq \lambda
	\]
	A simple negation to both sides gives
	\[
		\|\nabla h(u)\|_F < \lambda \implies \|u\|^4_2 < \max\{\frac{2L_s}{\alpha_s} \|M^*\|^2_F, (\frac{2\lambda \sqrt{r}}{\alpha_s})^{4/3}\}
	\]
	If we set $u = \hat x$, then LHS of the above relationship is automatically satisfied for arbitrarily small $\lambda$ since $\|\nabla h(\hat x)\|_F = 0$, and thus we conclude that
	\[
		\|u\|^4_2 < \frac{2L_s}{\alpha_s} \|M^*\|^2_F
	\]
	since $(\frac{2\lambda \sqrt{r}}{\alpha_s})^{4/3}$ can be made arbitrarily small.
\end{proof}

\begin{proof}[Proof of Theorem \ref{thm:socp_z}]
	Again utilizing the assumption that $A_s$ matrices are symmetric( $A_s$ can be converted to be symmetric without altering the observation $b$), we arrive at
	\begin{equation}
	\begin{aligned}
		\text{LHS of \eqref{eq:socp_unlifted}} = 2 \sum_{a,i,j,s}^{[m] \times [n] \circ 4} (A_a)_{sk} (A_a)_{ij} (\hat x_i \hat x_j -z_i z_j) u_k u_s + 
		4 \sum_{a,i,j,s}^{[m] \times [n] \circ 4} (A_a)_{sk} (A_a)_{ij} \hat x_i \hat x_k u_j u_s \geq 0
	\end{aligned}
	\end{equation}
	for any SOP $\hat x$. If we substitute $z$ into the above equation, we obtain that for any $u \in \RR^n$
	\begin{equation}\label{eq:unlifted_z_socp}
		\sum_{a,i,j,s}^{[m] \times [n] \circ 4} (A_a)_{sk} (A_a)_{ij} z_i z_k u_j u_s \geq 0
	\end{equation}
	Then given any $\Delta \in \RR^{n \otimes l}$, we CP decompose ( CANDECOMP, a standard tensor decomposition scheme) it as:
	\[
		\Delta = \sum_{p=1}^R \delta^{p,1} \otimes \dots \otimes \delta^{p,l}
	\]
	where $R$ is the rank of $\Delta$, a finite number. Next,  we consider \eqref{eq:socp_lifted} evaluated at $z^{\otimes l}$, and we have that LHS of \eqref{eq:socp_lifted} equals:
	\begin{equation}
		\begin{aligned}
		  &\sum_{p=1}^R \left[ 2 \sum_{\{a_\alpha, i_\alpha, j_\alpha, s_\alpha, k_\alpha \}_{\alpha=1}^l}^{([m] \circ l) \times \left[ ([n] \circ l) \circ 4 \right]} \left( \prod_{\alpha=0}^l A_{a_\alpha s_\alpha k_\alpha} A_{a_\alpha i_\alpha j_\alpha} z_{i_\alpha} z_{j_\alpha} \delta^{p,\alpha}_{s_\alpha} \delta^{p,\alpha}_{k_\alpha}\right) - \left( \prod_{\alpha=0}^l A_{a_\alpha s_\alpha k_\alpha} A_{a_\alpha i_\alpha j_\alpha} z_{i_\alpha} z_{j_\alpha} \delta^{p,\alpha}_{s_\alpha} \delta^{p,\alpha}_{k_\alpha} \right) \right] + \\
			&4 \sum_{\{a_\alpha, i_\alpha, j_\alpha, s_\alpha, k_\alpha \}_{\alpha=1}^l}^{([m] \circ l) \times \left[ ([n] \circ l) \circ 4 \right]} \prod_{\alpha=0}^l A_{a_\alpha s_\alpha k_\alpha} A_{a_\alpha i_\alpha j_\alpha} z_{i_\alpha} z_{k_\alpha} \delta^{p,\alpha}_{j_\alpha} \delta^{p,\alpha}_{s_\alpha} \\
		&= \sum_{p=1}^R  4 \sum_{\{a_\alpha, i_\alpha, j_\alpha, s_\alpha, k_\alpha \}_{\alpha=1}^l}^{([m] \circ l) \times \left[ ([n] \circ l) \circ 4 \right]} \prod_{\alpha=0}^l A_{a_\alpha s_\alpha k_\alpha} A_{a_\alpha i_\alpha j_\alpha} z_{i_\alpha} z_{k_\alpha} \delta^{p,\alpha}_{j_\alpha} \delta^{p,\alpha}_{s_\alpha} \\
		&= 4 \sum_{p=1}^R  \prod_{\alpha=0}^l  \left( \sum_{a,i,j,s}^{[m] \times [n] \circ 4} A_{a_\alpha s_\alpha k_\alpha} A_{a_\alpha i_\alpha j_\alpha} z_{i_\alpha} z_{k_\alpha} \delta^{p,\alpha}_{j_\alpha} \delta^{p,\alpha}_{s_\alpha} \right) \geq 0
		\end{aligned}
	\end{equation}
	where the last inequality follows from \eqref{eq:unlifted_z_socp}. 
	
	\end{proof}

\end{document}